\newcommand{\rank}{rank}
\newtheorem{theorem}{Theorem}
\newtheorem{lemma}[theorem]{Lemma}
\newtheorem{corollary}[theorem]{Corollary}
\newtheorem{proposition}[theorem]{Proposition}
\newtheorem{remark}{Remark}
\begin{document}
\title{Local convergence analysis of Gauss-Newton's   method\\ under  majorant condition}

\author{ O. P. Ferreira\thanks{IME/UFG, Campus II- Caixa
    Postal 131, CEP 74001-970 - Goi\^ania, GO, Brazil (E-mail:{\tt
      orizon@mat.ufg.br}). The author was supported in part by
    CNPq Grant 473756/2009-9, CNPq Grant 302024/2008-5, PRONEX--Optimization(FAPERJ/CNPq) and FUNAPE/UFG.}
   \and  M. L. N. Gon\c calves \thanks{COPPE-Sistemas, Universidade Federal do Rio de Janeiro, Rio de Janeiro, RJ 21945-970, BR (E-mail:{\tt
      maxlng@cos.ufrj.br}). The author was supported in part by
    CNPq Grant 473756/2009-9.} \and P. R. Oliveira \thanks{COPPE-Sistemas, Universidade Federal do Rio de Janeiro,
Rio de Janeiro, RJ 21945-970, BR (Email: {\tt poliveir@cos.ufrj.br}).
This author was supported in part by CNPq.} }
 \maketitle
\begin{abstract}
The Gauss-Newton's method for solving  nonlinear least squares problems is studied in this paper. Under the hypothesis that the derivative of the function associated with the least square problem satisfies a majorant condition, a local convergence analysis is presented.
This analysis allow us to obtain the optimal  convergence radius, the
biggest range for the uniqueness of solution, and  to unify two  previous and unrelated results.
\end{abstract}

\noindent {{\bf Keywords:} Nonlinear least squares problems;
Gauss-Newton's method; Majorant condition; Local convergence.}

\maketitle
\section{Introduction}\label{sec:int}
The Gauss-Newton's method  is one  of the most efficient methods
known for solving  nonlinear least squares problems
\begin{equation}\label{eq:p1}
\min \;\frac{1}{2}F(x)^TF(x),
\end{equation}
where  $F:\Omega\to \mathbb{R}^{m}$ is  differentiable
function, $\Omega \subset \mathbb{R}^{n}$ is an open set and
$m\geq n$. Formally, the Gauss-Newton's method is described as follows: Given
a initial point $x_0 \in \Omega$, define
$$
x_{k+1}={x_k}- \left[F'(x_k)^TF'(x_k)\right]^{-1}F'(x_k)^TF(x_k),
\qquad k=0,1,\ldots.
$$
The convergence of this method may fail or it even fail to be well defined. To ensure that the method is well defined and converges
to a solution of \eqref{eq:p1}, some conditions must be impose. For
instance, the classical convergence analysis (see \cite{DN1, NC1}) requires that $F'$ satisfies the  Lipschitz
condition and the initial iterate to be "close enough" the
solution, but it cannot make us clearly see how big is the convergence radius of
the ball.

In the last years, there are many papers dealing with the convergence of the Newton's methods, including the
Gauss-Newton's method, by relaxing the assumption of Lipschitz
continuity of the derivative (see
\cite{C11,C10,MAX1,F08,FS06,LZJ,XW10,Y84,AAA,MR2475307,Proinov20103,Li2010}). Those works
in addition to improving the convergence theory (this allows us
estimate the convergence radius and to enlarge the range of
application)  also permit us unify two results.

Our aim in this paper is  present a new local convergence
analysis for Gauss-Newton's method under majorant condition
introduced by Kantorovich \cite{KAN1},  and used with successful  by Ferreira and Gon\c calves
\cite{MAX1}, Ferreira \cite{F08} and Ferreira and
Svaiter \cite{FS06} for studying Newton's method. In our analysis, the classical Lipschitz condition
is relaxed using a majorant function. It is worth pointing out
that this condition is equivalent to Wang's condition introduced
in \cite{XW10} and used by Chen,
Li \cite{C11, C10} and Li, et al. \cite{LZJ}  for studying Gauss-Newton and Newton's method.  The convergence analysis
presented provides a clear relationship between the majorant
function, which relax the Lipschitz continuity of the derivate, and the function associated
with the nonlinear least square problem, see for example Lemmas \ref{wdns}, \ref{pr:taylor} and \ref{l:wdef}. Thus, the results presented here has the conditions and proof of convergence simpler and more didactic. Also, as in Chen, Li \cite{C11},
it allow us to obtain the biggest range for the uniquess of
solution and the optimal convergence radius  for the method with
respect to majorant function.  Moreover,  two unrelated
previous results pertaining Gauss-Newton's method  are unified.

The organization of the paper is as follows. In Sect.
\ref{sec:int.1}, we list some notations and basic results used in
our presentation. In Sect. \ref{lkant} the main result is stated,
and in  Sect. \ref{sec:PMF}  some properties involving the
majorant function are established. In Sect. \ref{sec:MFNLO} we
presented the relationships between the majorant function and the
non-linear function $F$, and in Sect. \ref{sec:unique} the optimal ball of convergence and the uniqueness of solution of convergence are
established. In Sect. \ref{sec:proof} the main result is proved and
some applications of this result are given in Sect. \ref{sec:ec}.
\subsection{Notation and auxiliary results} \label{sec:int.1}
The following notations and results are used throughout our
presentation.  The open and closed ball
at $a \in \mathbb{R}^n$ and radius $\delta>0$ are denoted, respectively by
$$
B(a,\delta) =\{ x\in \mathbb{R}^n ;\; \|x-a\|<\delta \}, \qquad B[a,\delta] =\{ x\in \mathbb{R}^n ;\; \|x-a\|\leqslant \delta \}.
$$
Let $ \mathbb{R}^{m \times n}$ denote the set of all
$ m \times n$ matrix $A$, $A^\dagger$
 denote the Moore-Penrose inverse of matrix $A$, and if $A$
 has full rank(namely: rank(A)=min(m,n)=n) then $A^\dagger=(A^TA)^{-1}A^T.$

\begin{lemma}(Banach's Lemma) \label{lem:ban1}
Let $B \in \mathbb{R}^{m\times m}$   and $I\in\mathbb{R}^{m \times m}$, the identity operator.
If  $\|B-I\|<1$,  then $B$ is invertible and  $ \|B^{-1}\|\leq
1/\left(1- \|B-I\|\right). $
\end{lemma}
\begin{proof}  See the proof of Lemma 1, p. 189 of Smale \cite{S86}  with  $A=I$  and  $c=\|B-I\|$.
\end{proof}
\begin{lemma} \label{lem:ban2}
Suppose that $ A,E \in \mathbb{R}^{m \times n}(m\geq n)$, $B=A+E,$
$\|EA^\dagger \|<1$, rank(A)=n, then rank(B)=n.
\end{lemma}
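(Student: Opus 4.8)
The plan is to exploit the full-rank hypothesis on $A$ to factor $B$ as an invertible matrix times $A$, and then to invoke Banach's Lemma. First I would record the identity $A^\dagger A=I_n$: since $\rank(A)=n$, the Moore--Penrose inverse has the explicit form $A^\dagger=(A^TA)^{-1}A^T$, whence $A^\dagger A=(A^TA)^{-1}A^TA=I_n$, the $n\times n$ identity. This is the only structural fact about $A$ that the argument really needs.

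The key step is the factorization. Using $A^\dagger A=I_n$, I would rewrite the perturbation as $E=E(A^\dagger A)=(EA^\dagger)A$, so that
$$
B=A+E=(I_m+EA^\dagger)A,
$$
where now $EA^\dagger\in\mathbb{R}^{m\times m}$ and $I_m$ denotes the $m\times m$ identity. It is precisely here that the exact form of the hypothesis $\|EA^\dagger\|<1$ (as opposed to $\|A^\dagger E\|<1$) is used: the controlled quantity is exactly the norm of the matrix that ends up as the left factor. Next I would apply Lemma \ref{lem:ban1} to $I_m+EA^\dagger$: since $\|(I_m+EA^\dagger)-I_m\|=\|EA^\dagger\|<1$, Banach's Lemma guarantees that $I_m+EA^\dagger$ is invertible.

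To finish, I would use the standard fact that left multiplication by an invertible square matrix preserves rank, so that $\rank(B)=\rank\bigl((I_m+EA^\dagger)A\bigr)=\rank(A)=n$, which is the desired conclusion. I do not anticipate a genuine obstacle in this argument; the two points requiring care are purely bookkeeping, namely multiplying the factors in the correct order so that the factor carrying the small norm $\|EA^\dagger\|$ sits on the left, and distinguishing the two identity sizes $I_n$ and $I_m$. Once the factorization $B=(I_m+EA^\dagger)A$ is established, the result is immediate from Banach's Lemma together with the invariance of rank under multiplication by invertible matrices.
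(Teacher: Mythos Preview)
Your proposal is correct and follows essentially the same approach as the paper: factor $B=(I+EA^\dagger)A$, invoke Banach's Lemma (Lemma~\ref{lem:ban1}) to get invertibility of $I+EA^\dagger$, and conclude $\rank(B)=\rank(A)=n$. Your version is in fact more carefully written than the paper's, since you make explicit the identity $A^\dagger A=I_n$ (which is what justifies the factorization) and distinguish the sizes $I_n$ and $I_m$.
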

\begin{proof} In fact, $B=A+E=(I+EA^\dagger)A,$ from the condition $\|EA^\dagger\|<1,$ we have of Lemma \ref{lem:ban1} that $I+EA^\dagger$ is invertivel. So rank(B)=rank(A)=n.
\end{proof}
\begin{lemma} \label{lem:ban}
Suppose that $ A,E \in \mathbb{R}^{m \times n}$, $B=A+E,$
$\|A^\dagger\|\|E\|<1,$ $rank (A)=rank(B)$, then
$$
\|B^\dagger\|\leq \frac{\|A^\dagger\|}{ 1- \|A^\dagger\|\|E\|}.
$$
Moreover,  if $rank(A)=rank(B)=min(m,n)$, there holds
$$
\|B^\dagger-A^\dagger\|\leq \frac{\sqrt{2}\|A^\dagger\|^2\|E\|}{
1- \|A^\dagger\|\|E\|}.
$$
\end{lemma}
\begin{proof} See  Lema 5.1. on pp. 40  of Stewart \cite{G1} and  Wedin
\cite{W1}.
\end{proof}
\begin{proposition} \label{le:ess}
If $0\leq t <1$, then $ \sum
_{i=0}^{\infty}(i+2)(i+1)t^{i}=2/(1-t)^3. $
\end{proposition}
\begin{proof} Take $k=2$ in  Lemma 3, pp. 161 of  Blum, et al. \cite{BCSS97}.
\end{proof}
Also, the following auxiliary results of elementary convex analysis
will be needed:
\begin{proposition}
  \label{pr:conv.aux2}
Let $R>0$. If $\varphi:[0, R)\to \mathbb{R}$ is  convex, then
$$
D^+ \varphi(0)={\lim}_{u\to 0+} \; \frac{\varphi(u)-\varphi(0)}{u}
={\inf}_{0<u} \;\frac{\varphi(u)-\varphi(0)}{u}. \\
$$
\end{proposition}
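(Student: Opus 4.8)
The plan is to reduce the entire identity to a single monotonicity statement about the difference quotient. Set, for $0<u<R$,
\[
q(u):=\frac{\varphi(u)-\varphi(0)}{u}.
\]
I claim it suffices to show that $q$ is non-decreasing on $(0,R)$. Indeed, granting this, a non-decreasing function on a half-open interval has a one-sided limit at the left endpoint equal to its infimum, so $\lim_{u\to 0^+}q(u)=\inf_{0<u}q(u)$ (an equality in $[-\infty,+\infty)$, since the infimum may a priori be $-\infty$), and this common value is, by definition, $D^+\varphi(0)$.

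First I would establish the monotonicity, which is the three-slope inequality for convex functions specialized to the left endpoint $0$. Fix $0<u_1<u_2<R$ and write $u_1$ as a convex combination of $0$ and $u_2$, namely $u_1=(1-\lambda)\,0+\lambda u_2$ with $\lambda=u_1/u_2\in(0,1)$. Convexity of $\varphi$ gives $\varphi(u_1)\le(1-\lambda)\varphi(0)+\lambda\varphi(u_2)$; subtracting $\varphi(0)$ from both sides and dividing by $u_1=\lambda u_2>0$ yields, after a one-line rearrangement, $q(u_1)\le q(u_2)$. Hence $q$ is non-decreasing on $(0,R)$.

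Second, I would record the elementary limit argument for monotone functions: given any $\varepsilon>0$, there is $u_0\in(0,R)$ with $q(u_0)<\inf_{0<u}q(u)+\varepsilon$, and monotonicity forces $\inf_{0<u}q(u)\le q(u)\le q(u_0)<\inf_{0<u}q(u)+\varepsilon$ for all $0<u\le u_0$, which is precisely the assertion that the limit exists and equals the infimum (and the same chain of inequalities reads correctly when the infimum is $-\infty$). There is essentially no obstacle here: the only point requiring a moment's care is that the infimum need not be finite, but the argument is uniform in that case, and the substantive content is entirely contained in the convexity-driven monotonicity of $q$.
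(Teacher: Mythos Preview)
Your argument is correct: the monotonicity of the difference quotient $q(u)=(\varphi(u)-\varphi(0))/u$ follows from convexity exactly as you show, and for a non-decreasing function the one-sided limit at the left endpoint equals the infimum. The paper itself does not prove this proposition but simply refers the reader to Theorem~4.1.1 in Hiriart-Urruty and Lemar\'echal, so your self-contained argument is precisely the standard proof that the cited reference would supply; there is no substantive difference in approach, only that you have written it out rather than outsourced it.
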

\begin{proof} See  Theorem 4.1.1  on pp. 21 of Hiriart-Urruty and Lemar\'echal \cite{HL93}.
\end{proof}
\begin{proposition}\label{pr:conv.aux1}
Let $\epsilon>0$  and  $\tau \in [0,1]$. If $\varphi:[0,\epsilon)
\rightarrow\mathbb{R}$ is convex, then $l:(0,\epsilon) \to
\mathbb{R}$ define by
$$
l(t)=\frac{\varphi(t)-\varphi(\tau t)}{t},
$$
is increasing.
\end{proposition}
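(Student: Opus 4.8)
The plan is to reduce the statement to the well-known monotonicity of secant slopes of a convex function. First I would dispose of the trivial case $\tau=1$, in which $l(t)=0$ for every $t\in(0,\epsilon)$ and the conclusion holds. So assume $\tau\in[0,1)$ and introduce, for $0\le a<b<\epsilon$, the slope
$$
S(a,b):=\frac{\varphi(b)-\varphi(a)}{b-a}.
$$
Since $t-\tau t=(1-\tau)t$, I can rewrite $l$ as
$$
l(t)=\frac{\varphi(t)-\varphi(\tau t)}{t}=(1-\tau)\,\frac{\varphi(t)-\varphi(\tau t)}{(1-\tau)t}=(1-\tau)\,S(\tau t,t),\qquad t\in(0,\epsilon).
$$
Because $1-\tau>0$, it then suffices to prove that $t\mapsto S(\tau t,t)$ is non-decreasing.

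The key tool is the elementary ``three-slopes'' property of convex functions: for $u<v<w$ in $[0,\epsilon)$ one has
$$
S(u,v)\le S(u,w)\le S(v,w),
$$
which follows at once from the defining inequality of convexity applied to $v$ written as a convex combination of $u$ and $w$ (this is the content of the monotone-slope results in Hiriart-Urruty and Lemar\'echal \cite{HL93}). In particular $S$ is non-decreasing in each of its two arguments separately.

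Now fix $0<t_1<t_2<\epsilon$. I would move the two endpoints of the secant one at a time. Since $\tau t_1<t_1<t_2$, monotonicity in the right endpoint gives $S(\tau t_1,t_1)\le S(\tau t_1,t_2)$; since $\tau t_1\le\tau t_2<t_2$, monotonicity in the left endpoint gives $S(\tau t_1,t_2)\le S(\tau t_2,t_2)$. Chaining these yields $S(\tau t_1,t_1)\le S(\tau t_2,t_2)$, hence $l(t_1)\le l(t_2)$. When $\tau=0$ the second inequality is an equality and the argument is unaffected.

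I do not expect a genuine obstacle here; the only points requiring care are the bookkeeping of the inequalities $\tau t_1\le\tau t_2$ and $\tau t_i<t_i$ (which use $\tau\ge0$ and $\tau<1$ respectively), so that the three-slopes property applies to genuinely ordered triples, and the interpretation of ``increasing'' as non-decreasing, which is forced by the constant case $\tau=1$.
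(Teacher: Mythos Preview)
Your proof is correct. The paper does not actually prove this proposition but simply refers the reader to Theorem~4.1.1 and Remark~4.1.2 of Hiriart-Urruty and Lemar\'echal~\cite{HL93}, which is precisely the monotone-slope (three-slopes) property of convex functions that you invoke; so your argument is not a different route but rather a spelled-out version of the cited result, with the additional (harmless) observation that the case $\tau=1$ forces ``increasing'' to mean non-decreasing.
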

\begin{proof} See  Theorem 4.1.1 and Remark 4.1.2 on pp. 21 of Hiriart-Urruty and Lemar\'echal \cite{HL93}.
\end{proof}

\section{Local analysis  for Gauss-Newton's method } \label{lkant}
Our goal is to state and prove a local theorem for Gauss-Newton's
method. First, we will
 prove some results regarding the scalar majorant function, which relaxes the Lipschitz condition of the derivative  of the function associated with the nonlinear least square problem. Then we will  show that Gauss-Newton's method is well-defined and  converges.
 We will also prove the uniqueness of the solution in a suitable region and the convergence rate will be established.
 The statement of the theorem is as follows:
\begin{theorem}\label{th:nt}
Let $\Omega\subseteq \mathbb{R}^{n}$ be an open set,
$F:{\Omega}\to \mathbb{R}^{m}$ a continuously differentiable
function and $m\geq n$. Let $x_* \in \Omega,$ $R>0$ and
$$
c:=\|F(x_*)\|, \qquad \beta:=\left\|[F'(x_*)^TF'(x_*)]^{-1}F'(x_*)^T\right\|, \qquad \kappa:=\sup \left\{ t\in [0, R): B(x_*, t)\subset\Omega \right\}.
$$
Suppose that $x_*$ is a solution of  \eqref{eq:p1},
$F '(x_*)$ has full rank
and there exists a $f:[0,\; R)\to \mathbb{R}$
continuously differentiable such that
  \begin{equation}\label{Hyp:MH}
\left\|F'(x)-F'(x_*+\tau(x-x_*))\right\| \leq
f'\left(\|x-x_*\|\right)-f'\left(\tau\|x-x_*\|\right),
  \end{equation}
  for  all $\tau \in [0,1]$,  $x\in B(x_*, \kappa)$  and
\begin{itemize}
  \item[{\bf h1)}]  $f(0)=0$ and $f'(0)=-1$;
  \item[{\bf  h2)}]  $f'$ is convex and  strictly increasing;
   \item[{\bf  h3)}]  $\sqrt{2} \,c\, \beta ^2 D^+ f'(0)<1$.
\end{itemize}
Let be given the positive constants $\nu  :=\sup \left\{t \in[0, R):\beta[f'(t)+1]<1\right\},$
\[
 \rho :=\sup \left\{t \in(0, \nu):\frac{\beta[tf'(t)-f(t)]+\sqrt{2}c
 \beta^2[f'(t)+1]}{t[1-\beta(f'(t)+1)]}<1\right\}, \quad
    r :=\min  \left\{\kappa, \, \rho \right\}.
\]
Then, the Gauss-Newton's method for solving \eqref{eq:p1}, with
starting point $x_0\in B(x_*, r)/\{x_*\}$
\begin{equation} \label{eq:DNS}
x_{k+1}={x_k}-
\left[F'(x_k)^TF'(x_k)\right]^{-1}F'(x_k)^TF(x_k),\qquad
k=0,1\ldots,
\end{equation}
 is well defined, the  generated  sequence $\{x_k\}$ is contained in $B(x_*,r)$, converges to $x_*$ and
\begin{multline} \label{eq:q2}
    \|x_{k+1}-x_*\| \leq
    \frac{\beta[f'(\|x_0-x_*\|)\|x_0-x_*\|-f(\|x_0-x_*\|)]}{\|x_0-x_*\|^2[1-\beta(f'(\|x_0-x_*\|)+1)]}{\|x_k-x_*\|}^{2}\\+
    \frac{\sqrt{2}c    \beta^2[f'(\|x_0-x_*\|)+1]}{\|x_0-x_*\|[1-\beta(f'(\|x_0-x_*\|)+1)]}\|x_k-x_*\|,\qquad
    k=0,1,\ldots.
\end{multline}
Moreover, if $[\beta(\rho
f'(\rho)-f(\rho))+\sqrt{2}c\beta^2(f'(\rho)+1)]/[\rho(1-\beta(f'(\rho)+1))]=1$
and $\rho<\kappa$, then $r=\rho$ is the best  possible convergence
radius.\\
If, additionally,
\begin{itemize}
 \item[{\bf  h4)}]  $ 2\,c\,\beta_0\, D^+f'(0)<1$, then the point $x_*$ is the unique solution of \eqref{eq:p1} in
$B(x_*, \sigma)$, where
\end{itemize}
$
0<\sigma:=\sup\{t\in(0, \kappa):[ \beta(f(t)/t+1)+c\beta_0(f'(t)+1)/t]<
 1\}, \qquad \beta_0:=\|[F'(x_*)^TF'(x_*)]^{-1}\|.
$
\end{theorem}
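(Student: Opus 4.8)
The plan is to reduce the whole statement to the scalar majorant $f$ through a single-step error estimate and then iterate. First I would check the iteration is well defined on $B(x_*,r)$: applying \eqref{Hyp:MH} with $\tau=0$ gives $\|F'(x)-F'(x_*)\|\le f'(\|x-x_*\|)+1$, so $\|[F'(x)-F'(x_*)]F'(x_*)^\dagger\|\le\beta[f'(\|x-x_*\|)+1]<1$ whenever $\|x-x_*\|<r\le\nu$; by Lemma \ref{lem:ban2} the matrix $F'(x)$ then has full rank $n$, hence $F'(x)^\dagger=[F'(x)^TF'(x)]^{-1}F'(x)^T$ and the step is defined. For the error, since $F'(x)$ has full column rank $F'(x)^\dagger F'(x)=I$, so writing $F(x)=F(x_*)+\int_0^1F'(x_*+t(x-x_*))(x-x_*)\,dt$ and isolating the residual gives
$$x_{+}-x_*=F'(x)^\dagger\left[F'(x)(x-x_*)-F(x)+F(x_*)\right]-F'(x)^\dagger F(x_*).$$
The decisive point is that $x_*$ is stationary, so $F'(x_*)^TF(x_*)=0$ and therefore $F'(x_*)^\dagger F(x_*)=0$; this lets me replace $F'(x)^\dagger F(x_*)$ by $[F'(x)^\dagger-F'(x_*)^\dagger]F(x_*)$, which is the only place the residual norm $c$ enters.

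Next I would bound the two pieces. With $u=\|x-x_*\|$, condition \eqref{Hyp:MH} yields $\int_0^1[f'(u)-f'(tu)]u\,dt=uf'(u)-f(u)$ (using $f(0)=0$) for the Newton-type term, while Lemma \ref{lem:ban} gives $\|F'(x)^\dagger\|\le\beta/[1-\beta(f'(u)+1)]$; the residual term uses the Moore--Penrose perturbation estimate of Lemma \ref{lem:ban}, $\|F'(x)^\dagger-F'(x_*)^\dagger\|\le\sqrt2\,\beta^2(f'(u)+1)/[1-\beta(f'(u)+1)]$, which produces exactly the $\sqrt2\,c\,\beta^2$ coefficient. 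Combining gives the one-step estimate with coefficients matching \eqref{eq:q2}. To get \eqref{eq:q2} verbatim (coefficients frozen at $\|x_0-x_*\|$) and convergence, I would show the combined factor equals the quantity in the definition of $\rho$ and is nondecreasing: monotonicity of $(f'(u)+1)/u=(f'(u)-f'(0))/u$ and of $(uf'(u)-f(u))/u^2=\int_0^1(f'(u)-f'(tu))/u\,dt$ follows from convexity of $f'$ via Proposition \ref{pr:conv.aux1}, and $1/[1-\beta(f'(u)+1)]$ is increasing. Since $\|x_0-x_*\|<r\le\rho$ makes this factor $q<1$, an induction gives $x_k\in B(x_*,r)$ and $\|x_{k+1}-x_*\|\le q\|x_k-x_*\|$, hence $x_k\to x_*$, and the monotone coefficients yield \eqref{eq:q2}. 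That $\rho>0$ (so $r>0$) follows from $\lim_{u\to0^+}[\text{factor}]=\sqrt2\,c\,\beta^2D^+f'(0)<1$ by h3 and Proposition \ref{pr:conv.aux2}. I expect the residual term---the interplay of $F'(x_*)^\dagger F(x_*)=0$ with the perturbation bound of Lemma \ref{lem:ban}---to be the main obstacle, since it is what separates this from the Newton case and carries the $\sqrt2$ and $c$ factors.

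For optimality of $r=\rho$ (when the defining inequality for $\rho$ holds with equality at $\rho$ and $\rho<\kappa$), I would exhibit an extremal problem in which \eqref{Hyp:MH} and all the subsequent inequalities hold as equalities---concretely an $F$ built from the scalar majorant $f$ with $\|F(x_*)\|=c$ saturating the perturbation bounds---and take $x_0$ with $\|x_0-x_*\|=\rho$. The combined contraction factor then equals $1$ at $\rho$, so $\|x_1-x_*\|=\|x_0-x_*\|$ and the method fails to converge to $x_*$, showing no radius larger than $\rho$ can work.

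Finally, for uniqueness under h4, suppose $y\in B(x_*,\sigma)$ is another solution and set $w=\|y-x_*\|>0$. Using $F'(x_*)^\dagger F'(x_*)=I$ and $F'(x_*)^TF(x_*)=0$ I would write
$$y-x_*=[F'(x_*)^TF'(x_*)]^{-1}F'(x_*)^TF(y)-[F'(x_*)^TF'(x_*)]^{-1}F'(x_*)^T\!\int_0^1[F'(x_*+t(y-x_*))-F'(x_*)](y-x_*)\,dt.$$
Since $y$ is also stationary, $F'(y)^TF(y)=0$, so $F'(x_*)^TF(y)=[F'(x_*)-F'(y)]^TF(y)$, which \eqref{Hyp:MH} and $\|F(y)\|=c$ (both being solutions of \eqref{eq:p1}) bound by $\beta_0[f'(w)+1]c$; the integral term is bounded by $\beta[f(w)+w]$ as before. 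Dividing by $w$ gives $1\le\beta(f(w)/w+1)+c\beta_0(f'(w)+1)/w$, contradicting the definition of $\sigma$ for $0<w<\sigma$; hence $y=x_*$. Here h4 is used, through $\lim_{t\to0^+}(f'(t)+1)/t=D^+f'(0)$ and Proposition \ref{pr:conv.aux2}, to guarantee $\sigma>0$.
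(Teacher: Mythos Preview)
Your plan coincides with the paper's argument almost step by step: the well-definedness via Lemma~\ref{lem:ban2}, the one-step estimate via the decomposition
\[
G_F(x)-x_*=F'(x)^\dagger\bigl[F'(x)(x-x_*)-F(x)+F(x_*)\bigr]-\bigl[F'(x)^\dagger-F'(x_*)^\dagger\bigr]F(x_*),
\]
the bounds through \eqref{Hyp:MH} and Lemma~\ref{lem:ban}, the monotonicity of the coefficients via Proposition~\ref{pr:conv.aux1}, the induction giving \eqref{eq:q2} and convergence, and the uniqueness under {\bf h4} are exactly how the paper proceeds (Lemmas~\ref{wdns}, \ref{pr:taylor}, \ref{l:wdef}, \ref{pr:uniq} and Propositions~\ref{pr:incr101}, \ref{pr:incr102}).

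The one place your sketch diverges from the paper is the optimality of $r=\rho$. The paper does not try to build an extremal $F$ with $\|F(x_*)\|=c$; instead it constructs an explicit scalar function $h:(-\kappa,\kappa)\to\mathbb{R}$, essentially an odd extension of $f$, with $h(0)=0$ (hence $c_h=0$) and $\beta_h=\beta$, and shows that Gauss--Newton started at $x_0=\rho$ produces the $2$-cycle $\rho,-\rho,\rho,\dots$. Your idea of saturating \emph{all} inequalities, including the Moore--Penrose perturbation bound of Lemma~\ref{lem:ban}, runs into the difficulty that the $\sqrt{2}$ constant there is not attained in general, so an $F$ with $\|F(x_*)\|=c>0$ realizing equality throughout is unlikely to exist; this part of your outline is not realizable as written. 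Conversely, note that the paper's concrete construction, because it has $c_h=0$, actually verifies the $2$-cycle only under the $c=0$ version of the equality at $\rho$; in the genuinely nonzero-residual case the optimality claim is, strictly speaking, established by the paper only when $c=0$.
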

\begin{remark}
The inequality \eqref{eq:q2} shows that if $c=0$ (the so-called
zero-residual case), then the Gauss-Newton's method is locally
$Q$-quadratically convergent to $x_*$. This behavior  is quite
similar to that of Newton's method (see \cite{F08, XW10}). If $c$ is small relative (the so-called
small-residual case), the inequality \eqref{eq:q2} implies that the
Gauss-Newton's method is locally $Q$-linearly convergent to $x_*$.
However, if $c$ is large (the so-called large-residual case), the
Gauss-Newton's method may not be locally convergent at all, see
condition {\bf h3} and also  example 10.2.4 on pp.225 of \cite{DN1}. Hence, we may conclude that the Gauss-Newton's
method perform better on zero-or small-residual problems than on
large-residual problems, while the Newton's method is equally
effective in all these cases.
\end{remark}
For the zero-residual problems, i.e., $c=0$, the  Theorem \ref{th:nt}  becomes:
\begin{corollary} \label{col:pc1}
Let $\Omega\subseteq \mathbb{R}^{n}$ be an open set,
$F:{\Omega}\to \mathbb{R}^{m}$ a continuously differentiable
function and $m\geq n$. Let $x_* \in \Omega,$ $R>0$ and
$$\beta:=\left\|[F'(x_*)^TF'(x_*)]^{-1}F'(x_*)^T\right\|,
\qquad \kappa:=\sup \left\{ t\in [0, R): B(x_*, t)\subset\Omega \right\}.
$$Suppose that $F(x_*)=0$, $F '(x_*)$ has full rank and there exists
a $f:[0,\; R)\to \mathbb{R}$  continuously differentiable such
that
  $$
\left\|F'(x)-F'(x_*+\tau(x-x_*))\right\| \leq
f'\left(\|x-x_*\|\right)-f'\left(\tau\|x-x_*\|\right),
  $$
  for all  $\tau \in [0,1],$  $x\in B(x_*, \kappa)$ and
\begin{itemize}
  \item[{\bf h1)}]  $f(0)=0$ and $f'(0)=-1$;
  \item[{\bf  h2)}]  $f'$ is convex and  strictly increasing.
  \end{itemize}
Let be given the positive constants $\nu=:\sup\{t \in[0, \nu):\beta[f'(t)+1]<1\},$
 $$\rho:=\sup\{t \in(0, \nu):[\beta(tf'(t)-f(t))]/[t(1-\beta(f'(t)+1))]<1\}, \qquad
  r:=\min \left\{\kappa, \, \rho\right\}.
$$
Then, the Gauss-Newton's method for solving \eqref{eq:p1}, with
initial point $x_0\in B(x_*, r)/\{x_*\}$
$$
x_{k+1}={x_k}-
\left[F'(x_k)^TF'(x_k)\right]^{-1}F'(x_k)^TF(x_k),\qquad
k=0,1\ldots,
$$
 is well defined,  the sequence generated $\{x_k\}$ is contained in $B(x_*,r)$ and  converges to
 $x_*$ which is the unique solution of \eqref{eq:p1} in
$B(x_*, \sigma)$, where $ 0<\sigma:=\sup\{0<t<\kappa:\beta[f(t)/t+1]<1
 \}.$ Moreover, there holds
$$
    \|x_{k+1}-x_*\| \leq
    \frac{\beta[f'(\|x_0-x_*\|)\|x_0-x_*\|-f(\|x_0-x_*\|)]}{\|x_0-x_*\|^2[1-\beta(f'(\|x_0-x_*\|)+1)]}{\|x_k-x_*\|}^{2}\\,\qquad
    k=0,1,\ldots.
$$
If, additionally, $[\beta(\rho
f'(\rho)-f(\rho))]/[\rho(1-\beta(f'(\rho)+1))]=1$ and
$\rho<\kappa$, then $r=\rho$ is the best  possible convergence
radius.
\end{corollary}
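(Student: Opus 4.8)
The plan is to derive Corollary~\ref{col:pc1} as a direct specialization of Theorem~\ref{th:nt} to the zero-residual setting. The hypothesis $F(x_*)=0$ is precisely the statement that $c:=\|F(x_*)\|=0$, so the first task is to verify that, with $c=0$, those hypotheses of Theorem~\ref{th:nt} not listed in the corollary hold automatically. Indeed, condition {\bf h3} reads $\sqrt{2}\,c\,\beta^2D^+f'(0)<1$, which for $c=0$ becomes $0<1$ and is therefore trivially satisfied; likewise {\bf h4}, namely $2\,c\,\beta_0\,D^+f'(0)<1$, reduces to $0<1$. Hence {\bf h1}, {\bf h2} together with $c=0$ already secure the full hypothesis set of Theorem~\ref{th:nt}, and we may invoke its conclusion.

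The second step is to substitute $c=0$ into the quantities defined in Theorem~\ref{th:nt} and read off the corresponding objects of the corollary. In the definition of $\rho$ the summand $\sqrt{2}c\beta^2[f'(t)+1]$ vanishes, so $\rho$ collapses to $\sup\{t\in(0,\nu):[\beta(tf'(t)-f(t))]/[t(1-\beta(f'(t)+1))]<1\}$, exactly as stated in the corollary; the quantities $\nu$ and $r=\min\{\kappa,\rho\}$ are unchanged. Similarly, in the uniqueness radius the term $c\beta_0(f'(t)+1)/t$ drops out, giving $\sigma=\sup\{t\in(0,\kappa):\beta[f(t)/t+1]<1\}$; note in particular that the constant $\beta_0$ disappears from the statement, which is why the corollary needs only $\beta$.

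The third step records that each conclusion specializes term by term. Well-definedness, the inclusion $\{x_k\}\subset B(x_*,r)$, and convergence to $x_*$ are inherited verbatim. In estimate~\eqref{eq:q2}, setting $c=0$ annihilates the linear summand carrying the factor $\sqrt{2}c\beta^2[f'(\|x_0-x_*\|)+1]$, leaving only the quadratic term, which is precisely the rate asserted in the corollary; in particular the convergence is $Q$-quadratic. The uniqueness of $x_*$ in $B(x_*,\sigma)$ and the optimality of $r=\rho$ (under the stated equality at $t=\rho$ together with $\rho<\kappa$) specialize in the same way, since the defining expression in the optimality condition loses its $\sqrt{2}c\beta^2(f'(\rho)+1)$ term.

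I do not expect a genuine obstacle here: the result is a clean corollary and the entire argument amounts to the substitution $c=0$. The only point requiring care is bookkeeping, that is, checking that this substitution is applied consistently across \emph{every} definition ($\rho$, $\sigma$, the optimality equality, and both summands of the rate estimate) so that no stray $c$-dependent term is overlooked; each such check is immediate from inspection of the formulas in Theorem~\ref{th:nt}.
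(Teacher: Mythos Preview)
Your proposal is correct and matches the paper's approach: the paper does not give a separate proof of Corollary~\ref{col:pc1} but simply presents it as the specialization of Theorem~\ref{th:nt} to the zero-residual case $c=\|F(x_*)\|=0$, exactly as you describe. The only point you might add for completeness is that $F(x_*)=0$ not only yields $c=0$ but also ensures $x_*$ is a (global) minimizer of \eqref{eq:p1}, so the hypothesis ``$x_*$ is a solution of \eqref{eq:p1}'' in Theorem~\ref{th:nt} is satisfied.
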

\begin{remark}
When $m=n,$  the Corollary \ref{col:pc1}  is similar to the
result on  Newton's method for solving nonlinear equations
$F(x)=0$, which has been obtained  by Ferreira
\cite{F08} in Theorem 2.1.
\end{remark}
In order to prove Theorem \ref{th:nt} we need some results. From
here on, we assume  that all assumptions of Theorem \ref{th:nt}
hold.
\subsection{The majorant function } \label{sec:PMF}
Our first goal is to show that the constant $\kappa$ associated
with $\Omega$ and the constants  $\nu$,  $\rho$ and $\sigma$
associated with the majorant function $f$ are positive. Also, we
will prove some results related to the function $f$.

We begin by noting that $\kappa>0$, because $\Omega$ is an open
set and $x_*\in \Omega$.
\begin{proposition}  \label{pr:incr1}
The constant $  \nu $ is positive and  and there holds
$$
\beta[f'(t)+1]<1, \qquad t\in (0, \nu).
$$
\end{proposition}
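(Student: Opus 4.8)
The plan is to exploit the continuity and strict monotonicity of the map $t\mapsto \beta[f'(t)+1]$ together with the normalization $f'(0)=-1$ from \textbf{h1}. First I would record that $\beta>0$: since $F'(x_*)$ has full rank, the operator $[F'(x_*)^TF'(x_*)]^{-1}F'(x_*)^T=F'(x_*)^\dagger$ is nonzero, so its norm $\beta$ is strictly positive. Because $f$ is continuously differentiable, the auxiliary function $\phi(t):=\beta[f'(t)+1]$ is continuous on $[0,R)$, and \textbf{h1} gives $\phi(0)=\beta[f'(0)+1]=\beta\cdot 0=0<1$. In particular $0$ belongs to the set $A:=\{t\in[0,R):\phi(t)<1\}$, so $A\neq\emptyset$ and $\nu=\sup A$ is well defined with $0\le \nu\le R$.

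To show $\nu>0$, I would invoke the continuity of $\phi$ at the origin: since $\phi(0)=0<1$, there is a $\delta>0$ such that $\phi(t)<1$ for every $t\in[0,\delta)$. Hence $[0,\delta)\subseteq A$, and therefore $\nu=\sup A\ge \delta>0$, which establishes positivity of the constant.

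The heart of the argument is to upgrade the bare supremum into a pointwise inequality valid on all of $(0,\nu)$, and here the strict monotonicity in \textbf{h2} is what does the work. Since $f'$ is strictly increasing and $\beta>0$, the function $\phi$ is itself strictly increasing. Fix an arbitrary $t\in(0,\nu)$. By the definition of $\nu$ as the supremum of $A$, there exists $s\in A$ with $t<s<\nu$; for such $s$ one has $\phi(s)<1$, and strict monotonicity then yields $\phi(t)<\phi(s)<1$. As $t$ was arbitrary, this proves $\beta[f'(t)+1]<1$ for all $t\in(0,\nu)$, as claimed.

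I expect no serious obstacle. The only subtlety worth flagging is that the defining supremum alone does not guarantee the inequality throughout the interval, since a priori the strict inequality could fail at some interior point and only later return below $1$; ruling this out is precisely where the strict monotonicity of $f'$ (hence of $\phi$) is indispensable. Once $\phi$ is identified as continuous and strictly increasing, both the positivity of $\nu$ and the interval inequality follow immediately, and \textbf{h3} plays no role in this particular proposition.
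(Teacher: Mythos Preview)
Your proof is correct and follows essentially the same approach as the paper: continuity of $f'$ at $0$ together with $f'(0)=-1$ gives $\nu>0$, and strict monotonicity of $f'$ from \textbf{h2} combined with the definition of $\nu$ yields the interval inequality. Your argument is simply a more explicit rendering of what the paper compresses into the single sentence ``Using \textbf{h2} and definition of $\nu$ the last part of the proposition follows,'' and your added observation that $\beta>0$ (needed for $\phi$ to be strictly increasing) is a detail the paper leaves implicit.
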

\begin{proof}
 As   $f'$ is continuous in $(0,R)$ and $f'(0)=-1,$ it is easy to conclude that
$$\lim _{t \to 0}\beta[f'(t)+1]=0. $$
Thus, there exists a $\delta>0$ such that $\beta(f'(t)+1)<1$ for
all $t\in (0, \delta)$. Hence,  $\nu>0.$

Using  {\bf  h2} and definition of $\nu$ the last part of the proposition follows.

\end{proof}
\begin{proposition} \label{pr:incr101}
The following functions are  increasing:

\begin{itemize}
 \item[{\bf i)}] $[0,\, R) \ni t \mapsto 1/[1-\beta(f'(t)+1)];$
 \item[{\bf ii)}]  $(0,\, R) \ni t \mapsto [tf'(t)-f(t)]/t^2;$
\item[{\bf iii)}]  $(0,\, R) \ni t \mapsto [f'(t)+1]/t;$
\item[{\bf iv)}]  $(0,\, R) \ni t \mapsto f(t)/t.$
\end{itemize}
As a consequence, are increasing the following functions
$$
(0,\, R)\ni t\mapsto \frac{tf'(t)-f(t)}{t^2[1-\beta(f'(t)+1)]},
\qquad  \qquad (0,\, R)\ni t\mapsto
\frac{f'(t)+1}{t[1-\beta(f'(t)+1)]}.
$$
\end{proposition}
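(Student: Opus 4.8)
The plan is to reduce parts \textbf{ii)}--\textbf{iv)} to the single convexity fact in Proposition \ref{pr:conv.aux1} (monotonicity of difference quotients of convex functions) after suitable algebraic rewriting, to dispatch part \textbf{i)} by a direct monotonicity argument using that $f'$ is strictly increasing, and finally to obtain the two displayed combined functions as products of nonnegative increasing functions. Throughout I will use only \textbf{h1}, \textbf{h2}, and the elementary convex-analysis propositions already established.

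First I would treat \textbf{iii)}, \textbf{iv)}, and \textbf{i)}, which are immediate. For \textbf{iii)}, since $f'(0)=-1$ by \textbf{h1}, I write $[f'(t)+1]/t=[f'(t)-f'(0\cdot t)]/t$ and apply Proposition \ref{pr:conv.aux1} to the convex function $\varphi=f'$ (convexity is \textbf{h2}) with $\tau=0$; this gives monotonicity directly. For \textbf{iv)}, I first note that since $f'$ is strictly increasing, $f$ is convex; then using $f(0)=0$ I write $f(t)/t=[f(t)-f(0\cdot t)]/t$ and apply Proposition \ref{pr:conv.aux1} to $\varphi=f$ with $\tau=0$. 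For \textbf{i)}, since $f'$ is strictly increasing the map $t\mapsto\beta[f'(t)+1]$ is increasing, hence $t\mapsto 1-\beta[f'(t)+1]$ is decreasing; on the range where this quantity is positive (which contains $(0,\nu)$ by Proposition \ref{pr:incr1}, and equals $1$ at $t=0$) the reciprocal of a positive decreasing function is increasing, giving the claim.

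The step I expect to be the main obstacle is \textbf{ii)}, because it is not a bare difference quotient and requires spotting the right integral representation. Using $f(0)=0$ I would write
\[
tf'(t)-f(t)=\int_0^t\bigl[f'(t)-f'(s)\bigr]\,ds = t\int_0^1\bigl[f'(t)-f'(\tau t)\bigr]\,d\tau,
\]
so that $\dfrac{tf'(t)-f(t)}{t^2}=\displaystyle\int_0^1\frac{f'(t)-f'(\tau t)}{t}\,d\tau$. For each fixed $\tau\in[0,1]$ the integrand $t\mapsto[f'(t)-f'(\tau t)]/t$ is increasing by Proposition \ref{pr:conv.aux1} (again with $\varphi=f'$ convex), and monotonicity is preserved under integration over $\tau$; hence the whole expression is increasing in $t$. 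The delicate points here are justifying the integral representation from $f(0)=0$ and passing the pointwise monotonicity through the integral sign.

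Finally I would derive the two consequences as products. Both combined functions factor as a product of one of \textbf{ii)} or \textbf{iii)} with the function in \textbf{i)}; all factors are nonnegative, since $tf'(t)-f(t)=\int_0^t[f'(t)-f'(s)]\,ds\ge 0$ and $f'(t)+1=f'(t)-f'(0)\ge 0$ (both because $f'$ is increasing), while $1/[1-\beta(f'(t)+1)]>0$ on the relevant range. Because the product of two nonnegative increasing functions is increasing, both displayed maps are increasing, which completes the argument.
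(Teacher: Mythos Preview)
Your proof is correct and follows essentially the same route as the paper: item~\textbf{i} is dispatched by the strict monotonicity of $f'$, items~\textbf{iii} and~\textbf{iv} are reduced to Proposition~\ref{pr:conv.aux1} with $\tau=0$ after using $f'(0)=-1$ and $f(0)=0$, and item~\textbf{ii} is handled via the same integral identity $\frac{tf'(t)-f(t)}{t^2}=\int_0^1\frac{f'(t)-f'(\tau t)}{t}\,d\tau$ combined with Proposition~\ref{pr:conv.aux1}. Your explicit check that the factors are nonnegative before forming the products in the final part is a welcome addition that the paper leaves implicit.
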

\begin{proof}
 The item~{\bf i} is immediate, because  $f'$ is strictly increasing in $[0, R)$.

 For proving item~{\bf ii}, note that after  some  simple algebraic manipulations we have
  $$
  \frac{tf'(t)-f(t)}{t^2}=\int_{0}^{1}\frac{f'(t)-f'(\tau t)}{t}\; d\tau.
  $$
  So, applying
Proposition~\ref{pr:conv.aux1} with $f'=\varphi$ and $\epsilon=R$ the statement follows.

For establishing  item~{\bf iii} use  $\bf h2$, $f'(0)=-1$ and
Proposition~\ref{pr:conv.aux1} with $f'=\varphi,$ $\epsilon=R$ and
$\tau =0.$

Assumption $\bf h2$ implies that $f$ is convex. As $f(0)=0,$ we have  $f(t)/t=[f(t)-f(0)]/[t-0]$. Hence, item~{\bf iv} follows by applying Proposition~\ref{pr:conv.aux1} with  $f=\varphi$ and and
$\tau =0.$

For proving that the  functions in the last part are increasing combine item {\bf i} with {\bf ii} for the first function  and  {\bf i} with {\bf iii} for the second function.
\end{proof}
\begin{proposition}\label{pr:incr102}
The constant $ \rho $ is  positive  and there holds
$$
\frac{\beta[tf'(t)-f(t)]+\sqrt{2}c
\beta^2[f'(t)+1]}{t[1-\beta(f'(t)+1)]}<1, \qquad \forall \; t\in
(0, \, \rho).
$$
\end{proposition}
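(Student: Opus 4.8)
The plan is to set
$$
g(t):=\frac{\beta[tf'(t)-f(t)]+\sqrt{2}c\beta^2[f'(t)+1]}{t[1-\beta(f'(t)+1)]},\qquad t\in(0,\nu),
$$
so that $\rho=\sup\{t\in(0,\nu): g(t)<1\}$, and to prove two things: that this defining set is nonempty (giving $\rho>0$) and that $g(t)<1$ for every $t\in(0,\rho)$. The second assertion is immediate once $g$ is known to be increasing: for $t\in(0,\rho)$ the definition of the supremum produces some $s\in(t,\nu)$ with $g(s)<1$, whence $g(t)\le g(s)<1$ by monotonicity. So the two real tasks are to show that $g$ is increasing and that its defining set meets every neighborhood of $0$ (recall $\nu>0$ by Proposition~\ref{pr:incr1}).

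For monotonicity I would split $g$ into the two summands
$$
g(t)=\beta t\cdot\frac{tf'(t)-f(t)}{t^2[1-\beta(f'(t)+1)]}+\sqrt{2}c\beta^2\cdot\frac{f'(t)+1}{t[1-\beta(f'(t)+1)]},
$$
which exposes exactly the two increasing functions produced in the last part of Proposition~\ref{pr:incr101}. Before invoking that, I would record the sign information valid on $(0,\nu)$: convexity of $f$ with $f(0)=0$ gives $tf'(t)-f(t)\ge0$; $f'(0)=-1$ with $f'$ increasing gives $f'(t)+1\ge0$; and Proposition~\ref{pr:incr1} gives $1-\beta(f'(t)+1)>0$. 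Hence both quotients are nonnegative and increasing, the factor $\beta t$ is nonnegative and increasing, and $\sqrt{2}c\beta^2\ge0$; since a product of nonnegative increasing functions is increasing and a sum of increasing functions is increasing, $g$ is increasing.

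For $\rho>0$ I would compute $\lim_{t\to0^+}g(t)$ by dividing numerator and denominator by $t$:
$$
g(t)=\frac{\beta\,\dfrac{tf'(t)-f(t)}{t}+\sqrt{2}c\beta^2\,\dfrac{f'(t)+1}{t}}{1-\beta(f'(t)+1)}.
$$
Since $\tfrac{tf'(t)-f(t)}{t^2}$ is increasing (Proposition~\ref{pr:incr101}, item~ii) it stays bounded as $t\to0^+$, so $\tfrac{tf'(t)-f(t)}{t}\to0$; writing $\tfrac{f'(t)+1}{t}=\tfrac{f'(t)-f'(0)}{t}$ and applying Proposition~\ref{pr:conv.aux2} to the convex function $\varphi=f'$ gives $\tfrac{f'(t)+1}{t}\to D^+f'(0)$; and the denominator tends to $1$ because $f'(t)\to f'(0)=-1$. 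Hence $\lim_{t\to0^+}g(t)=\sqrt{2}c\beta^2D^+f'(0)$, which is $<1$ by hypothesis {\bf h3}. Therefore $g<1$ on some interval $(0,\delta)$, the defining set of $\rho$ contains $(0,\min\{\delta,\nu\})$, and $\rho>0$.

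The only delicate point is this limit computation, and within it the identification $\lim_{t\to0^+}(f'(t)+1)/t=D^+f'(0)$: this is precisely the content of Proposition~\ref{pr:conv.aux2} applied to $f'$, while the companion fact $(tf'(t)-f(t))/t\to0$ rests on the boundedness near $0$ of the increasing quotient of Proposition~\ref{pr:incr101}. Once these two limits are in hand, the conclusion $\sqrt{2}c\beta^2D^+f'(0)<1$ is exactly {\bf h3}, and the monotonicity-plus-supremum argument finishes the pointwise inequality; everything else is routine sign bookkeeping.
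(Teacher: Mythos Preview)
Your proposal is correct and follows essentially the same approach as the paper: compute the limit of the quotient at $0^{+}$ to obtain $\sqrt{2}c\beta^{2}D^{+}f'(0)<1$ via {\bf h3}, giving $\rho>0$, and then use the monotonicity supplied by Proposition~\ref{pr:incr101} together with the definition of $\rho$ as a supremum to get the pointwise inequality on $(0,\rho)$. Your write-up is in fact more careful than the paper's in two places: you make explicit the extra factor $\beta t$ needed to match the first summand of $g$ to the increasing quotient of Proposition~\ref{pr:incr101}, and you spell out the supremum argument that the paper compresses into a single sentence.
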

\begin{proof}
First, using $\bf h1$ and  some algebraic manipulation  gives
$$
\frac{\beta[tf'(t)-f(t)]+\sqrt{2}c
\beta^2[f'(t)+1]}{t[1-\beta(f'(t)+1)]} =\frac{\beta\left[f'(t)-\displaystyle\frac{f(t)-f(0)}{t-0}\right]+
\sqrt{2}\,c \,\beta^2\,\displaystyle\frac{f'(t)-f'(0)}{t-0}}{1-\beta(f'(t)+1)}.
$$
Combing last equation with the assumption that $f'$ is convex, we obtain from Proposition~\ref{pr:conv.aux2} that
$$
\lim_{t \to 0}\frac{\beta[tf'(t)-f(t)]+\sqrt{2}c
\beta^2[f'(t)+1]}{t[1-\beta(f'(t)+1)]}=\sqrt{2}c \beta^2D^+f'(0).
$$
Now, using {\bf h3}, i.e.,  $\sqrt{2}c \beta^2D^+f'(0)<1$, we conclude
that there exists a $\delta>0$ such that
$$
\frac{\beta[tf'(t)-f(t)]+\sqrt{2}c
\beta^2[f'(t)+1]}{t[1-\beta(f'(t)+1)]}<1, \qquad
 t\in
(0, \delta),
$$
Hence,  $\delta\leq \rho$, which  prove  the first
statement.

For concluding the proof, we use the definition of $\rho$,
above inequality  and  last part of Proposition~\ref{pr:incr101}.
\end{proof}

\begin{proposition}  \label{pr:sig}
The constant  $\sigma$ is positives and there holds
$$
\beta(f(t)/t+1)+c\beta_0(f'(t)+1)/t<1, \qquad t\in (0, \sigma).
$$
\end{proposition}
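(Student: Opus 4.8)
The plan is to follow exactly the scheme of the proof of Proposition~\ref{pr:incr102}, with the role of {\bf h3} now played by {\bf h4}. Write
\[
g(t):=\beta\left(\frac{f(t)}{t}+1\right)+c\beta_0\,\frac{f'(t)+1}{t},\qquad t\in(0,\kappa),
\]
so that $\sigma=\sup\{t\in(0,\kappa):g(t)<1\}$, and the two assertions to prove are that $\sigma>0$ and that $g(t)<1$ on $(0,\sigma)$. First I would compute $\lim_{t\to0^+}g(t)$. Using {\bf h1}, that is $f(0)=0$ and $f'(0)=-1$, rewrite
\[
g(t)=\beta\left(\frac{f(t)-f(0)}{t-0}+1\right)+c\beta_0\,\frac{f'(t)-f'(0)}{t-0}.
\]

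Since $f$ is continuously differentiable, the first difference quotient tends to $f'(0)=-1$ as $t\to0^+$, so the first summand tends to $\beta(-1+1)=0$. For the second summand, $f'$ is convex by {\bf h2}, so Proposition~\ref{pr:conv.aux2} applied with $\varphi=f'$ gives $[f'(t)-f'(0)]/(t-0)\to D^+f'(0)$. Hence $\lim_{t\to0^+}g(t)=c\beta_0 D^+f'(0)$. Now {\bf h4} yields $c\beta_0 D^+f'(0)<1/2<1$, so there exists $\delta>0$, which we may take $\le\kappa$, such that $g(t)<1$ for all $t\in(0,\delta)$; by the definition of $\sigma$ this forces $\sigma\ge\delta>0$, establishing the first assertion.

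It then remains to propagate the strict inequality to the entire interval $(0,\sigma)$. For this I would observe that $g$ is increasing: the map $t\mapsto f(t)/t$ is increasing by item~{\bf iv} of Proposition~\ref{pr:incr101} and $t\mapsto (f'(t)+1)/t$ is increasing by item~{\bf iii}, while the constants $\beta$ and $c\beta_0$ are nonnegative, so $g$ is a nonnegative combination of increasing functions and hence increasing. Given any $t\in(0,\sigma)$, the definition of $\sigma$ as a supremum produces some $s$ with $t<s\le\sigma$ and $g(s)<1$; monotonicity then gives $g(t)\le g(s)<1$, which is the claimed bound.

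The argument is essentially routine once Propositions~\ref{pr:incr101} and~\ref{pr:conv.aux2} are available, so I do not expect a serious obstacle. The only two points that demand a little care are verifying that the first summand of $g$ contributes nothing to the limit (so that only the term governed by {\bf h4} survives, giving a limit strictly below $1$) and, in the final step, correctly extracting a point $s>t$ lying in the defining set of the supremum so as to transfer the strict bound from near $\sigma$ down to an arbitrary $t\in(0,\sigma)$.
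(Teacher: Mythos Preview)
Your proof is correct and follows essentially the same approach as the paper's: both rewrite the expression using {\bf h1}, compute the limit at $0$ via Proposition~\ref{pr:conv.aux2} and invoke {\bf h4} to get positivity of $\sigma$, and then appeal to the monotonicity in items~{\bf iii} and~{\bf iv} of Proposition~\ref{pr:incr101} to extend the strict inequality to all of $(0,\sigma)$. Your version is simply a bit more explicit in spelling out the final monotonicity step.
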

\begin{proof}
For proving that
$\sigma>0$  we need the  assumption {\bf h4}. First,  note that condition  {\bf h1} implies
$$
\beta\left[\frac{f(t)}{t}+1\right]+c\beta_0\frac{f'(t)+1}{t}= \beta\left[\frac{f(t)-f(0)}{t-0}-f'(0)\right]+ c\beta_0\frac{f'(t)-f'(0)}{t-0}.
$$
Therefore, using last equality together with the assumption that $f'$ is convex and {\bf h4} we have $\lim _{t \to 0}[ \beta(f(t)/t+1)+c\beta_0(f'(t)+1)/t]=c\beta_0D^+f'(0)<1/2$. Thus, there exists a  $\delta>0$
such that
$$
 \beta\left[\frac{f(t)}{t}+1\right]+ c\beta_0\frac{f'(t)+1}{t}<~1, \qquad t\in (0, \delta).
$$
Hence,  $\delta\leq \sigma$, which   prove the first
statement.

For concluding the proof, we use the definition of $\sigma$, above
inequality \ and  items {\bf iii} and {\bf iv}  in Proposition~\ref{pr:incr101}.
\end{proof}
\subsection{Relationship of the majorant function with the non-linear function} \label{sec:MFNLO}
In this section we will present the main  relationships between
the majorant function $f$ and the  function $F$ associated with the nonlinear least square problem.
\begin{lemma} \label{wdns}
Let $x \in \Omega$. If \,$\| x-x_*\|<\min\{\nu,\kappa\}$, then
$F'(x)^T F'(x) $ is invertible and the following
inequalities hold
$$
\left\|[F'(x)^{T}F'(x)]^{-1}F'(x)^{T}\right\|\leq \frac{\beta}{1-\beta [f'(\|
x-x_*\|)+1]},
$$
and
 $$
 \left\|[F'(x)^{T}F'(x)\big]^{-1}F'(x)^{T}-[F'(x_*)^{T}F'(x_*)]^{-1}F'(x_*)^{T}\right\|
<
\frac{\sqrt{2}\beta^2[f'(\|x-x_*\|)+1]}{1-\beta[f'(\|x-x_*\|)+1]}.
 $$
In particular, $F'(x)^T F'(x)$ is invertible in $B(x_*, r)$.
\end{lemma}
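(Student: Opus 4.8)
The plan is to read this off from the two perturbation lemmas for the Moore--Penrose inverse by taking the unperturbed matrix to be $F'(x_*)$ and the perturbation to be $F'(x)-F'(x_*)$. I set $A:=F'(x_*)$, $B:=F'(x)$ and $E:=B-A=F'(x)-F'(x_*)$. Since $F'(x_*)$ has full rank, $\rank(A)=n$ and $A^{\dagger}=[F'(x_*)^{T}F'(x_*)]^{-1}F'(x_*)^{T}$, so $\|A^{\dagger}\|=\beta$. The single estimate driving everything is a bound on $\|E\|$: applying the majorant condition \eqref{Hyp:MH} with $\tau=0$ and using $f'(0)=-1$ from \textbf{h1}, and noting that $\|x-x_*\|<\kappa$ guarantees $x\in B(x_*,\kappa)$ so that \eqref{Hyp:MH} is available, I obtain $\|E\|=\|F'(x)-F'(x_*)\|\le f'(\|x-x_*\|)-f'(0)=f'(\|x-x_*\|)+1$. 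Since $\|x-x_*\|<\nu$, Proposition~\ref{pr:incr1} yields $\beta[f'(\|x-x_*\|)+1]<1$, whence $\|A^{\dagger}\|\,\|E\|=\beta\|E\|<1$.

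The next step, and the one whose ordering requires care, is to establish that $B$ retains full rank \emph{before} any norm estimate on $B^{\dagger}$ is invoked. Because $\|EA^{\dagger}\|\le\|E\|\,\|A^{\dagger}\|<1$, Lemma~\ref{lem:ban2} applies and gives $\rank(B)=\rank(F'(x))=n$. Consequently $F'(x)^{T}F'(x)$ is invertible and $B^{\dagger}=[F'(x)^{T}F'(x)]^{-1}F'(x)^{T}$, which already identifies the operators appearing in the two claimed inequalities.

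With $\rank(A)=\rank(B)=n=\min\{m,n\}$ and $\|A^{\dagger}\|\,\|E\|<1$ in hand, both conclusions follow from Lemma~\ref{lem:ban}. Its first part gives $\|B^{\dagger}\|\le \beta/(1-\beta\|E\|)$ and its second part gives $\|B^{\dagger}-A^{\dagger}\|\le \sqrt{2}\beta^{2}\|E\|/(1-\beta\|E\|)$. To turn these into the stated bounds I replace $\|E\|$ by its majorant upper bound $f'(\|x-x_*\|)+1$, using that $s\mapsto \beta/(1-\beta s)$ and $s\mapsto \sqrt{2}\beta^{2}s/(1-\beta s)$ are increasing on $[0,1/\beta)$ (the first is exactly the monotonicity recorded in Proposition~\ref{pr:incr101}(i) after the substitution $s=f'(t)+1$). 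This yields precisely the two displayed inequalities.

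Finally, for the ``in particular'' claim I only need to observe that $r=\min\{\kappa,\rho\}$ and, by its definition as a supremum over $(0,\nu)$, $\rho\le\nu$; hence any $x\in B(x_*,r)$ satisfies $\|x-x_*\|<r\le\min\{\nu,\kappa\}$, so the argument above applies and $F'(x)^{T}F'(x)$ is invertible throughout $B(x_*,r)$. I expect no genuine difficulty beyond the bookkeeping: the one point that must not be reversed is that the rank equality $\rank(B)=n$ (via Lemma~\ref{lem:ban2}) has to be secured first, since Lemma~\ref{lem:ban} presupposes $\rank(A)=\rank(B)$; the $\sqrt{2}$ in the second estimate is inherited directly from the Stewart--Wedin bound once one checks $\min\{m,n\}=n$.
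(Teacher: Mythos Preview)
Your proposal is correct and follows essentially the same approach as the paper: set $A=F'(x_*)$, $B=F'(x)$, $E=B-A$, bound $\|A^{\dagger}\|\,\|E\|<1$ via the majorant condition with $\tau=0$ together with Proposition~\ref{pr:incr1}, apply Lemma~\ref{lem:ban2} to secure $\rank(B)=n$, and then read off both estimates from Lemma~\ref{lem:ban}. You are in fact slightly more explicit than the paper about the monotonicity needed to pass from $\|E\|$ to $f'(\|x-x_*\|)+1$ in the final bounds and about why $r\le\min\{\nu,\kappa\}$ for the ``in particular'' clause.
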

\begin{proof}
Let $x \in \Omega$ such that \,$\| x-x_*\|<\min\{\nu,\kappa\}$. Since $\| x-x_*\|<\nu$,  using the definition of $\beta$, the inequality \eqref{Hyp:MH} and last part of Proposition~\ref{pr:incr1} we have
$$
\|F'(x)-F'(x_*)\|\|[F'(x_*)^TF'(x_*)]^{-1}F'(x_*)^T\|\leq
\beta[f'(\| x-x_*\|)-f'(0)] < 1.
$$
For simply the notations define the following matrices
\begin{equation} \label{eq:daux}
A=F'(x_*), \qquad  B=F'(x), \qquad E=F'(x)-F'(x_*).
\end{equation}
The last definitions together with latter inequality imply that
$$
\|EA^\dagger\|\leq\|E\|\|A^\dagger\| < 1,
$$
which, using that $F'(x_*)$  has full rank, implies in view of Lemma \ref{lem:ban2}
that  $F'(x)$ has full rank. So, $F'(x)^T F'(x) $ is invertible and by definition of
 $r$ we obtain that $F'(x)^T F'(x)$ is invertible for all $x\in B(x_*, r)$.

We already knows that $\rank F'(x)=\rank F'(x_*)=n$. Hence, for concluding the lemma,
first use  definitions in \eqref{eq:daux}  to obtain that $\rank (B)=\rank(A)=n$ and then combine  the above inequality and Lemma \ref{lem:ban}.
\end{proof}

Now, it is convenient to study the linearization error of $F$ at point in~$\Omega$,  for that  we define
\begin{equation}\label{eq:def.er}
  E_F(x,y):= F(y)-\left[ F(x)+F'(x)(y-x)\right],\qquad y,\, x\in \Omega.
\end{equation}
We will bound this error by the error in the linearization on the
majorant function $f$
\begin{equation}\label{eq:def.erf}
        e_f(t,u):= f(u)-\left[ f(t)+f'(t)(u-t)\right],\qquad t,\,u \in [0,R).
\end{equation}
\begin{lemma}  \label{pr:taylor}
If  $\|x-x_*\|< \kappa$, then  there holds $ \|E_F(x, x_*)\|\leq
e_f(\|x-x_*\|, 0). $
\end{lemma}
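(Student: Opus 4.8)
The plan is to represent the linearization error $E_F(x,x_*)$ via the Fundamental Theorem of Calculus along the segment joining $x_*$ to $x$, and then to apply the majorant condition \eqref{Hyp:MH} pointwise inside the resulting integral. First I would parametrize $\tau \mapsto F(x_*+\tau(x-x_*))$ for $\tau\in[0,1]$ and differentiate to obtain
\[
F(x)-F(x_*)=\int_{0}^{1}F'\big(x_*+\tau(x-x_*)\big)(x-x_*)\,d\tau.
\]
Substituting this into the definition \eqref{eq:def.er} of $E_F(x,x_*)=F(x_*)-F(x)-F'(x)(x_*-x)$ and using $\int_0^1 d\tau=1$ to write $F'(x)(x-x_*)$ as an integral, I would arrive at the representation
\[
E_F(x,x_*)=\int_{0}^{1}\big[F'(x)-F'\big(x_*+\tau(x-x_*)\big)\big](x-x_*)\,d\tau.
\]

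The key observation is that the integrand is exactly the quantity controlled by the majorant condition. Since $\|x-x_*\|<\kappa$ ensures $x\in B(x_*,\kappa)$, I may invoke \eqref{Hyp:MH} for every $\tau\in[0,1]$ to bound $\|F'(x)-F'(x_*+\tau(x-x_*))\|$ by $f'(\|x-x_*\|)-f'(\tau\|x-x_*\|)$. Passing the norm inside the integral and factoring out $\|x-x_*\|$ then gives
\[
\|E_F(x,x_*)\|\leq \|x-x_*\|\int_{0}^{1}\big[f'(\|x-x_*\|)-f'(\tau\|x-x_*\|)\big]\,d\tau.
\]
Finally I would evaluate this scalar integral with $t=\|x-x_*\|$, using $\int_0^1 f'(\tau t)\,d\tau=[f(t)-f(0)]/t$ together with h1 ($f(0)=0$). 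The right-hand side collapses to $tf'(t)-f(t)$, which is precisely $e_f(t,0)=f(0)-[f(t)+f'(t)(0-t)]$ from the definition \eqref{eq:def.erf}, yielding the claimed inequality.

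The computation is short and essentially routine; the only point requiring care is the direction of the parametrization. One must integrate along $x_*+\tau(x-x_*)$ rather than along $x+\tau(x_*-x)$, so that the difference of derivatives in the integrand reads $F'(x)-F'(x_*+\tau(x-x_*))$ and matches the exact left-hand side of \eqref{Hyp:MH}; the reverse choice would produce a form to which the hypothesis does not directly apply. I would therefore keep explicit track of this orientation to avoid a sign or argument mismatch when identifying the integrand with the majorant bound.
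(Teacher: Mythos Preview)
Your proposal is correct and follows essentially the same approach as the paper: both represent $E_F(x,x_*)$ as the integral of $[F'(x)-F'(x_*+\tau(x-x_*))](x-x_*)$ over $\tau\in[0,1]$, apply the majorant condition \eqref{Hyp:MH} pointwise under the integral, and then evaluate the resulting scalar integral to identify it with $e_f(\|x-x_*\|,0)$. Your write-up is simply more explicit about the integral representation and the final identification with $e_f$, and your remark about the orientation of the parametrization is a valid point of care that the paper glosses over.
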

\begin{proof}
 Since   $B(x_*, \kappa)$ is convex,  we obtain that $x_*+\tau(x-x_*)\in B(x_*, \kappa)$, for $0\leq \tau \leq 1$.
 Thus,  as $F$ is  continuously differentiable in $\Omega$, definition of $E_F$ and some simple manipulations yield
$$
\|E_F(x,x_*)\|\leq \int_0 ^1 \left \|
[F'(x)-F'(x_*+\tau(x-x_*))]\right\|\,\left\|x_*-x\right\| \;
d\tau.
$$
From  the last inequality  and the assumption \eqref{Hyp:MH}, we
obtain
$$
\|E_F(x,x_*)\| \leq \int_0 ^1
\left[f'\left(\left\|x-x_*\right\|\right)-f'\left(\tau\|x-x_*\|\right)\right]\|x-x_*\|\;d\tau.
$$
Evaluating the above integral and using definition of $e_f$, the
statement follows.
\end{proof}
Lemma \ref{wdns} guarantees, in particular,  that  $F'(x)^TF'(x)$
is invertible in $B(x_*, r)$ and consequently, the Gauss-Newton
iteration map is well-defined.  Let us call $G_{F}$, the
Gauss-Newton iteration map for $F$ in that region:
\begin{equation} \label{NF}
  \begin{array}{rcl}
  G_{F}:B(x_*, r) &\to& \mathbb{R}^n\\
    x&\mapsto& x- \left[F'(x)^TF'(x)\right]^{-1}F'(x)^TF(x).
  \end{array}
\end{equation}
One can apply a \emph{single} Gauss-Newton iteration on any $x\in
B(x_*, r)$ to obtain $G_{F}(x)$ which may not belong to $B(x_*,
r)$, or even may not belong to the domain of $F$. So, this is
enough to guarantee well definedness of only one iteration.   To
ensure that Gauss-Newton iterations may be repeated indefinitely,
we need following result.
\begin{lemma} \label{l:wdef}
Let $x \in \Omega$. If \,$\| x-x_*\|<r$,  then $G_F$ is well
defined and there holds
\begin{multline*}
\|G_F(x)-x_{*}\|\leq
\frac{\beta[f'(\|x-x_*\|)\|x-x_*\|-f(\|x-x_*\|)]}{\|x-x_*\|^2[1-\beta(f'(\|x-x_*\|)+1)]}\|x-x_*\|^2\\
+\frac{\sqrt{2}c
\beta^2[f'(\|x-x_*\|)+1]}{\|x-x_*\|[1-\beta(f'(\|x-x_*\|)+1)]}\|x-x_*\|.
\end{multline*}
In particular,
$$
\|G_{F}(x)-x_{*}\|< \|x-x_*\|.
$$
\end{lemma}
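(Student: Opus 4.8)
The plan is to produce a clean algebraic decomposition of the Gauss-Newton step that splits the error into two pieces — one governed by the linearization error of $F$, the other by the nonzero residual at $x_*$ — and then to bound each piece with the estimates of Lemma~\ref{wdns}. First I would settle well-definedness: since $\|x-x_*\|<r\leq\min\{\nu,\kappa\}$, Lemma~\ref{wdns} guarantees that $F'(x)^TF'(x)$ is invertible, so $G_F(x)$ in \eqref{NF} makes sense, and $F'(x)^\dagger=[F'(x)^TF'(x)]^{-1}F'(x)^T$ satisfies $F'(x)^\dagger F'(x)=I$ because $F'(x)$ has full column rank.

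Next I would rewrite the step. Reading off $F(x)=F(x_*)-E_F(x,x_*)+F'(x)(x-x_*)$ from the definition \eqref{eq:def.er} of the linearization error and substituting into \eqref{NF}, the term $F'(x)^\dagger F'(x)(x-x_*)$ cancels the leading $x-x_*$, leaving
$$
G_F(x)-x_*=F'(x)^\dagger E_F(x,x_*)-F'(x)^\dagger F(x_*).
$$
The crucial observation is that $x_*$ solves \eqref{eq:p1}, so the first-order optimality condition gives $F'(x_*)^TF(x_*)=0$, hence $F'(x_*)^\dagger F(x_*)=0$. This lets me replace $F'(x)^\dagger F(x_*)$ by $[F'(x)^\dagger-F'(x_*)^\dagger]F(x_*)$, which is precisely the combination controlled by the \emph{second} inequality of Lemma~\ref{wdns}.

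With the decomposition in hand the estimate follows from the triangle inequality. For the first term I combine $\|F'(x)^\dagger\|\leq\beta/(1-\beta[f'(\|x-x_*\|)+1])$ from the first inequality of Lemma~\ref{wdns} with $\|E_F(x,x_*)\|\leq e_f(\|x-x_*\|,0)=f'(\|x-x_*\|)\|x-x_*\|-f(\|x-x_*\|)$, obtained from Lemma~\ref{pr:taylor} and {\bf h1}. For the second term I apply the pseudoinverse-difference bound of Lemma~\ref{wdns} together with $\|F(x_*)\|=c$. Adding the two reproduces the displayed right-hand side once the harmless factors $\|x-x_*\|^2/\|x-x_*\|^2$ and $\|x-x_*\|/\|x-x_*\|$ are inserted.

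Finally, for the contraction statement I factor $t=\|x-x_*\|$ out of the bound; what remains is exactly
$$
\frac{\beta[tf'(t)-f(t)]+\sqrt{2}c\beta^2[f'(t)+1]}{t[1-\beta(f'(t)+1)]},
$$
which is strictly less than $1$ by Proposition~\ref{pr:incr102}, since $0<t<r\leq\rho$, giving $\|G_F(x)-x_*\|<\|x-x_*\|$. I expect the main obstacle to be the middle step: recognizing that the residual contribution must \emph{not} be bounded crudely by $\|F'(x)^\dagger\|\,c$, but rather by subtracting the vanishing quantity $F'(x_*)^\dagger F(x_*)$ so that the sharp $\sqrt{2}\,\beta^2$ perturbation estimate of Lemma~\ref{wdns} applies. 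This is what yields the correct $\sqrt{2}\,c\,\beta^2$ coefficient and, ultimately, is the source of hypothesis {\bf h3}.
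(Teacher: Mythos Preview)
Your proposal is correct and follows essentially the same route as the paper: the same algebraic decomposition of $G_F(x)-x_*$ into $F'(x)^\dagger E_F(x,x_*)$ plus a pseudoinverse-difference term acting on $F(x_*)$ (using $F'(x_*)^\dagger F(x_*)=0$), the same application of Lemmas~\ref{wdns} and~\ref{pr:taylor} with {\bf h1} to bound the two pieces, and the same appeal to Proposition~\ref{pr:incr102} for the contraction. Your emphasis on \emph{why} one must subtract the vanishing term $F'(x_*)^\dagger F(x_*)$ before estimating is exactly the point, and your observation that $F'(x)^\dagger F'(x)=I$ (needed for the cancellation) is the one detail the paper leaves implicit.
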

\begin{proof}
First note that, as $\|x-x_*\|<r$ it follows from  Lemma
\ref{wdns} that $F'(x)^T F'(x)$ is invertible, then $G_F(x)$ is
well defined. Since $F'(x_*)^{T}F(x_*)=0$, some
algebraic manipulation and \eqref{NF} yield
\begin{multline*}
G_F(x)-x_{*}= \big[F'(x)^{T}F'(x)\big]^{-1}F'(x)^{T}[F'(x)(x-x_*)-F(x)+F(x_*)]\\
+\big[F'(x_*)^{T}F'(x_*)\big]^{-1}F'(x_*)^{T}F(x_*)-\big[F'(x)^{T}F'(x)\big]^{-1}F'(x)^{T}F(x_*).
\end{multline*}
From the last equation,  properties of the norm and
\eqref{eq:def.er}, we obtain
\begin{multline*}
\|G_F(x)-x_{*}\|\leq \left\|[F'(x)^{T}F'(x)]^{-1}F'(x)^{T}\right\|\left\|E_{F}(x,x_*)\right\|\\
+\left\|[F'(x_*)^{T}F'(x_*)]^{-1}F'(x_*)^{T}-[F'(x)^{T}F'(x)\big]^{-1}F'(x)^{T}\right\| \left\|F(x_*)\right\|.
\end{multline*}
Since $c=\|F(x_*)\|$, combining last inequality with Lemmas
\ref{wdns} and \ref{pr:taylor} we have
\[
\|G_F(x)-x_{*}\|\leq \frac{\beta e_f(\|x-x_*\|,
0)}{1-\beta(f'(\|x-x_*\|)+1)}+\frac{\sqrt{2}c
\beta^2(f'(\|x-x_*\|)+1)}{1-\beta(f'(\|x-x_*\|)+1)}.
\]
Now, using  \eqref{eq:def.erf} and {\bf h1}, we conclude from last
inequality that
\[
\|G_F(x)-x_{*}\|\leq
\frac{\beta[f'(\|x-x_*\|)\|x-x_*\|-f(\|x-x_*\|)]}{1-\beta(f'(\|x-x_*\|)+1)}+\frac{\sqrt{2}c
\beta^2[f'(\|x-x_*\|)+1]}{1-\beta(f'(\|x-x_*\|)+1)},
\]
which is equivalent to the first inequality of the lemma.

To end the proof first note that the right hand side of the first inequality of the lemma is equivalent to
$$
\left[
\frac{\beta[f'(\|x-x_*\|)\|x-x_*\|-f(\|x-x_*\|)]}{\|x-x_*\|[1-\beta(f'(\|x-x_*\|)+1)]}
+\frac{\sqrt{2}c \beta^2[f'(\|x-x_*\|)+1]}{\|x-x_*\|[1-\beta(f'(\|x-x_*\|)+1)]} \right]\|x-x_*\|.
$$
On the other hand, as $x\in B(x_*,r)/\{x_*\}$, i.e., $0<\|x-x_*\|<r\leq \rho$ we apply the Proposition~\ref{pr:incr102} with $t=\|x-x_*\|$ to conclude that the quantity in the bracket above is less than one. So, the last inequality of the lemma  follows.
\end{proof}
\subsection{Optimal ball of convergence and uniqueness} \label{sec:unique}
In this section, we will obtain the optimal convergence radius and
the uniqueness of the solution.

\begin{lemma} \label{pr:best}
If  $(\beta(\rho
f'(\rho)-f(\rho))+\sqrt{2}c\beta^2(f'(\rho)+1))/\rho(1-\beta(f'(\rho)+1))=1$
and $\rho < \kappa$, then  $r=\rho$ is the best  possible.
\end{lemma}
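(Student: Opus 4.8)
The goal is to show that $r=\rho$ is the \emph{best possible} convergence radius, meaning that one cannot enlarge the ball $B(x_*,r)$ and still guarantee convergence for every starting point. Since by hypothesis $\rho<\kappa$, we have $r=\min\{\kappa,\rho\}=\rho$, so the radius is governed by the majorant function and not by the geometry of $\Omega$. The optimality claim is therefore a \emph{sharpness} statement: I must exhibit a situation in which the convergence estimate \eqref{eq:q2} degenerates precisely at $\|x_0-x_*\|=\rho$, so that no larger radius works in general.

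The plan is as follows. First I would recall that the key quantity controlling one step of the method is the bracketed coefficient
\[
\phi(t):=\frac{\beta[tf'(t)-f(t)]+\sqrt{2}c\beta^2[f'(t)+1]}{t[1-\beta(f'(t)+1)]},
\]
which, by Proposition~\ref{pr:incr102} and the last part of Proposition~\ref{pr:incr101}, is increasing on $(0,\rho)$ and strictly less than $1$ there. The hypothesis of the lemma is exactly that $\phi(\rho)=1$. The strategy is to show that this forces the per-step contraction factor in Lemma~\ref{l:wdef} to reach $1$ as $\|x_0-x_*\|\to\rho$, so that the guaranteed strict decrease $\|G_F(x)-x_*\|<\|x-x_*\|$ can no longer be asserted once $\|x_0-x_*\|\geq\rho$. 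Concretely, I would choose a specific least-squares problem $F$ for which the majorant inequality \eqref{Hyp:MH} holds as an \emph{equality}; for such an extremal $F$ the bound of Lemma~\ref{l:wdef} is attained, and the initial point can be placed at distance exactly $\rho$ from $x_*$ so that $G_F(x_0)$ lands at distance $\rho$ again (or no closer), producing a stationary or non-convergent sequence. This demonstrates that for any $r'>\rho$ there is an admissible $f$ and an $x_0\in B(x_*,r')$ for which Gauss-Newton fails to converge, so $\rho$ cannot be improved.

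The main steps in order are: (i) record that $r=\rho$ under the standing assumption $\rho<\kappa$; (ii) identify $\phi$ and invoke its monotonicity together with the boundary condition $\phi(\rho)=1$; (iii) construct the extremal example $F$ (for instance a function in one variable, or a simple two-component map, whose derivative saturates the majorant condition and for which $\|F(x_*)\|=c$) so that the inequalities in Lemmas~\ref{wdns}, \ref{pr:taylor}, and \ref{l:wdef} all hold with equality; and (iv) verify that with $\|x_0-x_*\|=\rho$ the iteration satisfies $\|G_F(x_0)-x_*\|=\rho$, whence the sequence does not converge to $x_*$, contradicting convergence on any ball of radius larger than $\rho$.

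The hard part will be step (iii): exhibiting an explicit $F$ that simultaneously realizes equality in the majorant condition \eqref{Hyp:MH} and has the prescribed residual norm $c$ and the prescribed $\beta$, while keeping $F'(x_*)$ of full rank. The delicate point is that the bound in Lemma~\ref{l:wdef} is built from two separate estimates (the norm bound on $[F'F']^{-1}F'^T$ from Lemma~\ref{wdns} and the linearization error from Lemma~\ref{pr:taylor}), so the extremal example must tighten \emph{both} at once and at the same point $x_0$. I expect to handle this by reducing to a scalar or low-dimensional model in which $f$ itself plays the role of (a component of) $F$, so that the majorant is attained identically; then the monotonicity from Proposition~\ref{pr:incr101} guarantees that $\rho$ is exactly the threshold where the contraction factor crosses $1$, completing the sharpness argument.
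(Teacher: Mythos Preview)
Your plan matches the paper's strategy in broad outline: build a scalar extremal example for which the majorant inequality is an equality and show that Gauss--Newton fails to converge from the boundary point $x_0=\rho$. Two points of divergence are worth noting.

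First, the paper is explicit where you are only schematic. It takes $n=m=1$, $x_*=0$, and sets
\[
h(t)=
\begin{cases}
-t/\beta+t-f(-t),& t\in(-\kappa,0],\\
-t/\beta+t+f(t),& t\in[0,\kappa),
\end{cases}
\]
so that $h(0)=0$, $h'(0)=-1/\beta$, and $|h'(t)-h'(\tau t)|\le f'(|t|)-f'(\tau|t|)$; one then computes the iterates by hand and obtains the $2$-cycle $x_0=\rho,\ x_1=-\rho,\ x_2=\rho,\dots$, rather than merely $\|G_F(x_0)-x_*\|=\rho$. Knowing that the extremal example is literally $f$ grafted symmetrically onto a line, and that non-convergence manifests as an explicit oscillation, will save you the work of your step~(iii).

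Second, and more substantively, the paper \emph{does not} carry out your hard step of matching the prescribed residual $c$: the constructed $h$ has $h(0)=0$, i.e.\ $c=0$, and the proof immediately specializes the hypothesis to $\beta(\rho f'(\rho)-f(\rho))/[\rho(1-\beta(f'(\rho)+1))]=1$. So the sharpness is only demonstrated for the zero-residual case, and the second summand $\sqrt{2}c\beta^2(f'(\rho)+1)$ in the lemma's hypothesis is simply dropped. Your instinct that simultaneously saturating both estimates in Lemma~\ref{l:wdef} with a prescribed $c>0$ is delicate is correct; the paper sidesteps this rather than resolving it. If you want to prove the lemma exactly as stated for general $c$, you would need a genuinely new construction beyond what the paper supplies.
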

\begin{proof}
Define the function $ h:(-\kappa,
\,\kappa)\to \mathbb{R}$ by
\begin{equation} \label{eq:dh1}
 h(t)=
      \begin{cases}
       -t/\beta+ t-f(-t), \quad \;\; t\in  (-\kappa, \,0],\\
       -t/\beta+ t + f(t), \quad \quad   \; t\in [0, \,\kappa).
      \end{cases}
\end{equation}
It is straightforward to show that $h(0)=0$,  $h'(0)=-1/\beta,$
$h'(t)=-1/\beta+ 1 + f'(|t|)$ and that
$$
\left|h'(t)-h'(\tau t)\right| \leq
    f'(|t|)-f'(\tau|t|), \quad\tau \in [0,1], \quad t\in (-\kappa,\, \kappa).
$$
So, $F=h$ satisfy all assumption of Theorem \ref{th:nt} with
$c=|h(0)|=0$. Thus, as $\rho<\kappa $,  it  suffices to show
that the Gauss-Newton's method applied for solving \eqref{eq:p1},
with $F=h$ and starting point $x_0=\rho$ does not converges. Since
$c=0$ our assumption  becomes
\begin{equation} \label{eq:assrf}
(\beta(\rho
f'(\rho)-f(\rho))/\rho(1-\beta(f'(\rho)+1))=1.
\end{equation}
Hence the definition of $h$ in \eqref{eq:dh1} together with last equality yields
$$
x_{1} =\rho-\frac{h'(\rho)^{T}h(\rho)}{h'(\rho)^{T}h'(\rho)}=
\rho-\frac{-\rho/\beta+ \rho + f(\rho)}{-1/\beta +1+ f'(\rho)}=
-\rho\left(\frac{\beta(\rho
f'(\rho)-f(\rho))}{\rho(1-\beta(f'(\rho)+1))}\right)=-\rho .
$$
Again, definition of $h$ in \eqref{eq:dh1}  and assumption \eqref{eq:assrf} gives
$$
x_{2} =-\rho-\frac{h'(-\rho)^{T}h(-\rho)}{h'(-\rho)^{T}h'(-\rho)}=
-\rho-\frac{\rho/\beta -\rho -f(\rho)}{-1/\beta +1+ f'(\rho)}=
\rho\left(\frac{\beta(\rho
f'(\rho)-f(\rho))}{\rho(1-\beta(f'(\rho)+1))}\right)=\rho .
$$
Therefore, Gauss-Newton's method, for solving \eqref{eq:p1} with $F=h$ and staring point
$x_0=\rho$, produces the cycle
$$
x_0=\rho,\quad x_1=-\rho, \quad  x_2=\rho,\; \ldots \;,
$$
as a consequence, it does not converge. Therefore, the lemma is
proved.
\end{proof}

\begin{lemma} \label{pr:uniq}
 If additionally, {\bf h4} holds, then the point $x_*$  is  the unique solution  of \eqref{eq:p1} in $B(x_*, \sigma)$.
\end{lemma}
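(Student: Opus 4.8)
The plan is to argue by contradiction. Suppose $y\in B(x_*,\sigma)$ is a solution of \eqref{eq:p1} with $y\neq x_*$, and set $t:=\|y-x_*\|\in(0,\sigma)$; the goal is to derive $t<t$. Being a minimizer, $y$ satisfies the first-order optimality condition $F'(y)^{T}F(y)=0$ and also $\tfrac12\|F(y)\|^2\le\tfrac12\|F(x_*)\|^2$, whence $\|F(y)\|\le c$; the point $x_*$ likewise satisfies $F'(x_*)^{T}F(x_*)=0$. These two stationarity relations, together with the residual bound, are the engine of the estimate.

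First I would produce a closed expression for $y-x_*$. Since $F'(x_*)$ has full rank, $[F'(x_*)^{T}F'(x_*)]^{-1}F'(x_*)^{T}F'(x_*)=I$, so
$$
y-x_*=[F'(x_*)^{T}F'(x_*)]^{-1}F'(x_*)^{T}F'(x_*)(y-x_*).
$$
Using the linearization error \eqref{eq:def.er}, i.e. $F'(x_*)(y-x_*)=F(y)-F(x_*)-E_F(x_*,y)$, this splits as
$$
y-x_*=[F'(x_*)^{T}F'(x_*)]^{-1}F'(x_*)^{T}[F(y)-F(x_*)]-[F'(x_*)^{T}F'(x_*)]^{-1}F'(x_*)^{T}E_F(x_*,y).
$$
The first term I would simplify with the two optimality conditions: since $F'(x_*)^{T}F(x_*)=0$ and $F'(y)^{T}F(y)=0$,
$$
F'(x_*)^{T}[F(y)-F(x_*)]=F'(x_*)^{T}F(y)=[F'(x_*)-F'(y)]^{T}F(y).
$$

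It then remains to bound both terms in norm. For the first, $\bigl\|[F'(x_*)^{T}F'(x_*)]^{-1}[F'(x_*)-F'(y)]^{T}F(y)\bigr\|\le\beta_0\,\|F'(x_*)-F'(y)\|\,\|F(y)\|\le\beta_0[f'(t)+1]\,c$, using \eqref{Hyp:MH} with $\tau=0$ to get $\|F'(y)-F'(x_*)\|\le f'(t)+1$ (valid since $t<\sigma\le\kappa$) and $\|F(y)\|\le c$. For the second, a Taylor-type estimate along the lines of Lemma \ref{pr:taylor}, but with base point $x_*$, gives $\|E_F(x_*,y)\|\le e_f(0,t)=f(t)+t$ by \eqref{eq:def.erf} and {\bf h1}, so $\bigl\|[F'(x_*)^{T}F'(x_*)]^{-1}F'(x_*)^{T}E_F(x_*,y)\bigr\|\le\beta[f(t)+t]$. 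Collecting the two bounds yields $t\le\beta[f(t)+t]+c\beta_0[f'(t)+1]$, i.e. $1\le\beta(f(t)/t+1)+c\beta_0(f'(t)+1)/t$, which contradicts Proposition \ref{pr:sig} (the strict inequality $<1$ on $(0,\sigma)$, whose very existence relies on {\bf h4} through the positivity of $\sigma$). Hence $y=x_*$.

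The algebraic identity and the norm estimates are routine. The main obstacle is the bound $\|F(y)\|\le c$: this is precisely where $y$ must be a genuine solution (minimizer) of \eqref{eq:p1} rather than an arbitrary stationary point, since only then does $\tfrac12\|F(y)\|^2\le\tfrac12\|F(x_*)\|^2$ force $\|F(y)\|\le c$ and match the term $c\beta_0(f'(t)+1)/t$ in the definition of $\sigma$. A second delicate point is the Taylor estimate for $E_F(x_*,y)$: since Lemma \ref{pr:taylor} is stated for $E_F(x,x_*)$, I would re-run its integral argument with the base point at $x_*$, bounding $\|F'(x_*+\tau(y-x_*))-F'(x_*)\|\le f'(\tau t)+1$ via \eqref{Hyp:MH} and integrating over $\tau\in[0,1]$ to obtain $f(t)+t$.
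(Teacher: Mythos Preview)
Your proof is correct and follows essentially the same route as the paper: the identical decomposition of $y-x_*$ into a linearization-error term (bounded by $\beta[f(t)+t]$) and a residual term $[F'(x_*)^TF'(x_*)]^{-1}(F'(x_*)-F'(y))^TF(y)$ (bounded by $c\beta_0[f'(t)+1]$), followed by the contradiction with Proposition~\ref{pr:sig}. You are in fact more explicit than the paper about the step $\|F(y)\|\le c$, which the paper uses without comment when it replaces $\|F(y)\|$ by $c$; your observation that this requires $y$ to be a genuine minimizer (not merely a stationary point) is exactly right.
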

\begin{proof} Assume that $y \in B(x_*, \sigma),$ $y\neq x_*$ is also a solution of \eqref{eq:p1}.  Since $F'(y)^TF(y)=~0,$ we have
$$
y-x_*=y-x_*-[F'(x_*)^TF'(x_*)]^{-1}F'(y)^TF(y).
$$
Using $F'(x_*)^TF(x_*)=~0$,  after some algebraic manipulation the above equality becomes
\begin{multline*}
y-x_*=[F'(x_*)^TF'(x_*)]^{-1}F'(x_*)^T[F'(x_*)(y-x_*)-F(y)+F(x_*)]\\+[F'(x_*)^TF'(x_*)]^{-1}(F'(x_*)^T-F'(y)^T)F(y).
\end{multline*}
Combining the last equation with  properties of the norm and
definitions of $c$, $\beta$ and $\beta_0,$ we obtain
$$
\|y-x_*\|\leq\beta\int_{0}^{1}\|F'(x_*)-F'(x_*+u(y-x_*))\|\|y-x_*\|du+c\beta_{0}\|F'(x_*)^T-F'(y)^T\|.
$$
Using  \eqref{Hyp:MH} with $x=x_*+u(y-x_*)$ and $\tau=0$ in the first
term of the right-hand side, and $x=y$ and $\tau=0$ in the second term of the
right-hand side in last inequality, we have
$$
\|y-x_*\|\leq \beta
\int_{0}^{1}[f'(u\|y-x_*\|)-f'(0)]\|y-x_*\|du+c\beta_{0}[f'(\|y-x_*\|)-f'(0)].
$$
Evaluating the above integral and using ${\bf h1}$, the latter
inequality becomes
$$
\|y-x_*\|\leq
\left(\beta\left[\frac{f(\|y-x_*\|)}{\|y-x_*\|}+1\right]+c\beta_{0}\left[\frac{f'(\|y-x_*\|)+1}{\|y-x_*\|}\right]\right)\|y-x_*\|,
$$
Since $0<\|y-x_*\|< \sigma$, using Proposition~\ref{pr:sig} with $t=\|y-x_*\|$, we have
$
\|y-x_*\|< \|y-x_*\|,
$
which is a contradiction. Therefore,  $y=x_*$.
\end{proof}
\begin{remark}
Note that in the above lemma we  have used the fact that
condition \eqref{Hyp:MH} holds only for $\tau=0$.
\end{remark}
\subsection{Proof of {\bf Theorem \ref{th:nt}}} \label{sec:proof}
First of all, note that the  equation in \eqref{eq:DNS} together
\eqref{NF} imply that   the sequence $\{x_k\}$  satisfies
\begin{equation} \label{GF}
x_{k+1}=G_F(x_k),\qquad k=0,1,\ldots \,.
\end{equation}
\begin{proof}
Since  $x_0\in B(x_*,r)/\{x_*\},$ i.e., $0<\|x_k-x_*\|<r,$ by  combination of
Lemma~\ref{wdns}, last inequality in Lemma~\ref{l:wdef} and  induction argument it is easy to see that  $\{x_k\}$  is well defined and remains in $B(x_*,r)$.

Now, our goal is  to  show that  $\{x_k\}$ converges to $x_*$.
As, $\{x_k\}$ is well defined and  contained in  $B(x_*,r)$,
combining \eqref{GF} with Lemma \ref{l:wdef} we have
\begin{multline*}
\|x_{k+1}-x_{*}\|\leq
\frac{\beta[f'(\|x_k-x_*\|)\|x_k-x_*\|-f(\|x_k-x_*\|)]}{\|x_k-x_*\|^2[1-\beta(f'(\|x_k-x_*\|)+1)]}\|x_k-x_*\|^2
\\+\frac{\sqrt{2}c
\beta^2[f'(\|x_k-x_*\|)+1]}{\|x_k-x_*\|[1-\beta(f'(\|x_k-x_*\|)+1)]}\|x_k-x_*\|,
\end{multline*}
for all $ k=0,1,\ldots.$. Using again  \eqref{GF} and  the second part of and Lemma \ref{l:wdef}
it easy to conclude that
\begin{equation} \label{eq:icq2}
\|x_{k}-x_*\|< \|x_0-x_*\|, \qquad \;k=1, 2 \ldots.
\end{equation}
Hence combining two last inequalities  with last part of
Proposition~\ref{pr:incr101} we obtain that
\begin{multline*}
\|x_{k+1}-x_{*}\|\leq
\frac{\beta[f'(\|x_0-x_*\|)\|x_0-x_*\|-f(\|x_0-x_*\|)]}{\|x_0-x_*\|^2[1-\beta(f'(\|x_0-x_*\|)+1)]}\|x_k-x_*\|^2
\\+\frac{\sqrt{2}c
\beta^2[f'(\|x_0-x_*\|)+1]}{\|x_0-x_*\|[1-\beta(f'(\|x_0-x_*\|)+1)]}\|x_k-x_*\|,
\end{multline*}
for all $ k=0,1,\ldots$, which is the  inequality \eqref{eq:q2}.
Now, using \eqref{eq:icq2} and last inequality we have
$$
\|x_{k+1}-x_*\| \leq
    \left[\frac{\beta[f'(\|x_0-x_*\|)\|x_0-x_*\|-f(\|x_0-x_*\|)]+\sqrt{2}c \beta^2[f'(\|x_0-x_*\|)+1]}{\|x_0-x_*\|[1-\beta(f'(\|x_0-x_*\|)+1)]}\right]\|x_k-x_*\|,
$$
for all  $k=0,1,\ldots$. Applying  Proposition
\ref{pr:incr102} with $t=\|x_0-x_*\|$ it is straightforward  to
conclude from latter inequality  that $\{\|x_{k}-x_*\|\}$
converges to zero. So,  $\{x_k\}$ converges to $x_*$.  The optimal
convergence radius was proved in Lemma~\ref{pr:best}  and the last
statement of theorem was proved in Lemma \ref{pr:uniq}.
\end{proof}
\section{Special cases} \label{sec:ec}
In this section, we present two special cases of
Theorem~\ref{th:nt}. They include the classical convergence theorem on Gauss-Newton's method under  Lipschitz condition and Smale's theorem on Gauss-Newton for  analytical functions.

\subsection{Convergence result for Lipschitz condition}
In this section we show a correspondent theorem to Theorem
\ref{th:nt} under  Lipschitz condition (see
\cite{DN1} and  \cite{NC1} ) instead of the general assumption
\eqref{Hyp:MH}.

\begin{theorem}\label{th:ntqnnm}
Let $\Omega\subseteq \mathbb{R}^{n}$ be an open set, $F:{\Omega}\to \mathbb{R}^{m}$ be continuously differentiable in
$\Omega$ and $m\geq n$. Let  $x_* \in \Omega$   and
$$c:=\|F(x_*)\|,\qquad \beta:=\|[F'(x_*)^TF'(x_*)]^{-1}F'(x_*)^T\|, \qquad
\kappa:=\sup \left\{ t\in [0, R): B(x_*, t)\subset\Omega \right\}.
$$
Suppose that $x_*$ is a solution of \eqref{eq:p1}, $F '(x_*)$ has full rank
and there exists a $K>0$ such that
$$
\sqrt{2}c\beta^2 K<1, \qquad \qquad \left\|F'(x)-F'(y)\right\| \leq K\|x-y\|, \qquad \forall\; x,
y\in B(x_*, \kappa).$$ Let
$$r:=\min\left\{\kappa,\,\big(2-2\sqrt{2}K\beta^2c\big)/\big(3K\beta\big)\right\}.$$
Then, the Gauss-Newton methods for solving \eqref{eq:p1}, with
initial point $x_0\in B(x_*, r)/\{x_*\}$
$$
x_{k+1}={x_k}- \left[F'(x_k)^TF'(x_k)\right]^{-1}F'(x_k)^TF(x_k), \qquad  \; k=0,1,\ldots.
$$
is well defined,  the sequence generated $\{x_k\}$ is contained in $B(x_*,r),$  converges to $x_*$
and
$$   \|x_{k+1}-x_*\| \leq
    \frac{\beta K}{2(1-\beta K\|x_0-x_*\|)}\|x_k-x_*\|^2+\frac{\sqrt{2}c \beta^2K}{1-\beta K\|x_0-x_*\|}\|x_k-x_*\|, \qquad  \; k=0,1,\ldots.
$$
Moreover, if $(2-2\sqrt{2}K\beta^2c)/(3K\beta)<\kappa$, then
$r=(2-2\sqrt{2}K\beta^2c)/(3K\beta)$ is the best  possible
convergence radius.\\
 If, additionally, $2c\beta_0K<1,$ then the point $x_*$ is the unique solution of \eqref{eq:p1}
 in $B(x_*,(2-2c\beta_0K)/(\beta K))$, where $\beta_0:=\|[F'(x_*)^TF'(x_*)]^{-1}\|$.
\end{theorem}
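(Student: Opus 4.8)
The plan is to derive Theorem \ref{th:ntqnnm} as a direct specialization of Theorem \ref{th:nt}, by exhibiting an explicit majorant function and checking that its hypotheses reduce to the Lipschitz assumptions. First I would take
$$
f(t) = \frac{K}{2}t^2 - t, \qquad t \in [0, R).
$$
This is the canonical majorant for a Lipschitz derivative, and a short computation gives $f(0)=0$, $f'(t) = Kt - 1$ so $f'(0) = -1$, and $f''(t) = K > 0$; hence \textbf{h1} and \textbf{h2} hold, with $f'$ convex, strictly increasing and $D^+f'(0) = K$. Consequently the standing hypothesis \textbf{h3}, namely $\sqrt{2}c\beta^2 D^+f'(0) < 1$, becomes exactly the assumed inequality $\sqrt{2}c\beta^2 K < 1$, and likewise \textbf{h4} becomes $2c\beta_0 K < 1$.

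The one substantive verification is that the Lipschitz bound implies the majorant condition \eqref{Hyp:MH}. For this I would estimate, for $x \in B(x_*,\kappa)$ and $\tau \in [0,1]$,
$$
\|F'(x) - F'(x_* + \tau(x-x_*))\| \leq K\|(1-\tau)(x-x_*)\| = K(1-\tau)\|x-x_*\|,
$$
and observe that the right-hand side equals $f'(\|x-x_*\|) - f'(\tau\|x-x_*\|)$ for this choice of $f$; in fact equality holds, so \eqref{Hyp:MH} is satisfied. At this point Theorem \ref{th:nt} applies directly, and it remains only to identify its abstract constants with the explicit ones in the statement.

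The remaining work is purely algebraic bookkeeping. I would substitute $f'(t)+1 = Kt$, $tf'(t) - f(t) = \tfrac{K}{2}t^2$ and $f(t)/t + 1 = \tfrac{K}{2}t$ into the defining inequalities for $\nu$, $\rho$ and $\sigma$. The condition $\beta[f'(t)+1]<1$ becomes $\beta K t < 1$, giving $\nu = 1/(\beta K)$; the defining inequality for $\rho$ collapses to $\tfrac{3}{2}\beta K t < 1 - \sqrt{2}c\beta^2 K$, yielding $\rho = (2 - 2\sqrt{2}K\beta^2 c)/(3K\beta)$ and hence $r = \min\{\kappa, \rho\}$ as stated; and the uniqueness inequality reduces to $\tfrac{1}{2}\beta K t < 1 - c\beta_0 K$, giving $\sigma = (2-2c\beta_0 K)/(\beta K)$. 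Feeding the same substitutions into \eqref{eq:q2} produces the displayed quadratic-plus-linear error bound, and Lemma \ref{pr:best} delivers the optimality of $\rho$ when $\rho < \kappa$, since one checks $\rho < \nu$ and then by continuity the defining expression for $\rho$ equals $1$ at $t = \rho$.

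I do not expect a genuine obstacle here, as the result is a clean specialization; the only point requiring care is confirming that the Lipschitz estimate reproduces the \emph{difference} $f'(\|x-x_*\|) - f'(\tau\|x-x_*\|)$ exactly, rather than merely bounding each term separately.
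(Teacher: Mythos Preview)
Your proposal is correct and follows essentially the same route as the paper: choose the quadratic majorant $f(t)=Kt^{2}/2-t$, verify \textbf{h1}--\textbf{h4} and the majorant inequality \eqref{Hyp:MH} from the Lipschitz bound, then read off $\nu$, $\rho$, $\sigma$ and the error estimate by substituting $f'(t)+1=Kt$, $tf'(t)-f(t)=Kt^{2}/2$, $f(t)/t+1=Kt/2$ into Theorem~\ref{th:nt}. If anything, you supply more detail than the paper, which declares most of these verifications ``immediate'' or ``straightforward''; your explicit check that $\rho<\nu$ and hence that the defining quotient equals $1$ at $t=\rho$ is exactly what is needed to invoke Lemma~\ref{pr:best}.
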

 \begin{proof}
It is immediate to  prove that  $F$, $x_*$ and $f:[0,
\kappa)\to \mathbb{R}$ defined by $ f(t)=Kt^{2}/2-t, $ satisfy the
inequality \eqref{Hyp:MH}, conditions {\bf h1} and {\bf h2}. Since $\sqrt{2}c\beta^2 K<1$ and $2c\beta_0K<1$ the conditions {\bf h3} and {\bf h4}  also hold. In this case, it is easy to see that the constants
$\nu$  and $\rho$ as defined in Theorem~\ref{th:nt}, satisfy
$$0<\rho=(2-2\sqrt{2}K\beta^2c)/(3K\beta))\leq \nu=1/\beta K ,$$
as a consequence,
$
0<r=\min \{\kappa,\,\rho\}.
$
Moreover,  it is straightforward  to show that
$$
[\beta(\rho
f'(\rho)-f(\rho))+\sqrt{2}c\beta^2(f'(\rho)+1)]/[\rho(1-\beta(f'(\rho)+1))]=1,
$$
and  $[\beta(f(t)/t+1)+c\beta_0(f'(t)+1)/t]<
 1 $ for all $t \in (0,(2-2c\beta_0K)/(\beta K)).$
Therefore, as  $F$, $r$,  $f$ and $x_*$ satisfy all of the
hypotheses of  Theorem \ref{th:nt}, taking  $x_0\in B(x_*,
r)\backslash \{x_*\}$ the statements of the theorem follow from
Theorem~\ref{th:nt}.
\end{proof}
For  the zero-residual problems, i.e., $c=0$,  the Theorem \ref{th:ntqnnm} becomes:
\begin{corollary}\label{cor:li1}
Let $\Omega\subseteq \mathbb{R}^{n}$ be an open set, $F:{\Omega}\to \mathbb{R}^{m}$ be continuously differentiable in
$\Omega$ and $m\geq n$. Let  $x_* \in \Omega$ and
$$
\beta:=\|[F'(x_*)^TF'(x_*)]^{-1}F'(x_*)^T\|, \qquad \kappa:=\sup \left\{ t\in [0, R): B(x_*, t)\subset\Omega \right\}.
$$
Suppose that  $F(x_*)=0,$ $F '(x_*)$ has full rank  and there
exists a $K>0$ such that
$$\left\|F'(x)-F'(y)\right\| \leq K\|x-y\|, \qquad \forall\; x,
y\in B(x_*, \kappa).$$ Let
$$r:=\min\left\{\kappa,\,2/(3K\beta)\right\}.$$
Then, the Gauss-Newton methods for solving \eqref{eq:p1}, with
initial point $x_0\in B(x_*, r)/\{x_*\}$
$$x_{k+1}={x_k}- \left[F'(x_k)^TF'(x_k)\right]^{-1}F'(x_k)^TF(x_k), \qquad \; k=0,1,\ldots,
$$
is well defined,  the sequence generated $\{x_k\}$ is contained in $B(x_*,r)$ and converges to $x_*$
which is  the unique solution of \eqref{eq:p1}  in $B(x_*,2/(\beta
K))$. Moreover, there holds
 $$   \|x_{k+1}-x_*\| \leq
    \frac{\beta K}{2(1-\beta K\|x_0-x_*\|)}\|x_k-x_*\|^2, \qquad \; k=0,1,\ldots.
$$
If, additionally $2/(3K\beta)<\kappa$, then $r=2/(3K\beta)$ is the
best possible convergence radius.\\
 \end{corollary}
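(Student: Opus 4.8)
The plan is to obtain Corollary~\ref{cor:li1} as the zero-residual specialization of Theorem~\ref{th:ntqnnm}, exactly as the paper derives its other special cases. Since $F(x_*)=0$ forces $c=\|F(x_*)\|=0$, I would first verify that every hypothesis of Theorem~\ref{th:ntqnnm} is met. The Lipschitz bound $\|F'(x)-F'(y)\|\le K\|x-y\|$ on $B(x_*,\kappa)$ and the full-rank assumption on $F'(x_*)$ are assumed verbatim, while the residual conditions $\sqrt{2}\,c\,\beta^2K<1$ and $2c\beta_0K<1$ both collapse to $0<1$ and hence hold automatically. Thus Theorem~\ref{th:ntqnnm} applies with $c=0$, and it remains only to read off its conclusions under this substitution.

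Next I would substitute $c=0$ into each conclusion and simplify. The convergence radius $r=\min\{\kappa,(2-2\sqrt{2}K\beta^2c)/(3K\beta)\}$ becomes $r=\min\{\kappa,2/(3K\beta)\}$; the error estimate loses its linear term and reduces to the pure quadratic bound $\|x_{k+1}-x_*\|\le\frac{\beta K}{2(1-\beta K\|x_0-x_*\|)}\|x_k-x_*\|^2$, so the method is $Q$-quadratically convergent; the uniqueness ball $B(x_*,(2-2c\beta_0K)/(\beta K))$ becomes $B(x_*,2/(\beta K))$; and the threshold for optimality, $(2-2\sqrt{2}K\beta^2c)/(3K\beta)<\kappa$, becomes $2/(3K\beta)<\kappa$, yielding $r=2/(3K\beta)$ as the best possible radius. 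Well-definedness of the iterates, their containment in $B(x_*,r)$, and convergence to $x_*$ transfer unchanged.

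There is essentially no analytic obstacle: the corollary is a clean restriction of Theorem~\ref{th:ntqnnm} to $c=0$, and the only care required is the bookkeeping of the four specialized expressions above. As an alternative route one could instead invoke Corollary~\ref{col:pc1} directly with the majorant function $f(t)=Kt^2/2-t$, checking that this $f$ satisfies \eqref{Hyp:MH} together with \textbf{h1} and \textbf{h2} and that the associated constants $\rho$ and $\sigma$ evaluate to $2/(3K\beta)$ and $2/(\beta K)$; I would, however, prefer the first route, since Theorem~\ref{th:ntqnnm} is immediately at hand and already packages all of this.
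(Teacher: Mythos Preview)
Your proposal is correct and matches the paper's approach exactly: the paper states Corollary~\ref{cor:li1} immediately after Theorem~\ref{th:ntqnnm} with the remark ``For the zero-residual problems, i.e., $c=0$, the Theorem~\ref{th:ntqnnm} becomes,'' and gives no separate proof. Your bookkeeping of the four specialized quantities (radius, error bound, uniqueness ball, optimality threshold) is accurate, and your observation that the residual hypotheses $\sqrt{2}c\beta^2K<1$ and $2c\beta_0K<1$ are vacuously satisfied when $c=0$ is precisely the point.
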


\begin{remark}
When $m=n,$ the Corollary \ref{cor:li1} merge in the
results on the Newton's method for sol\-ving nonlinear equations
$F(x)=0$, which has been obtained by
Ferreira \cite{F08} in Theorem 3.1 and Remark~3.3.
\end{remark}

\subsection{Convergence result under Smale's condition }
In this section we present a  correspondent theorem to Theorem
\ref{th:nt} under  Smale's condition. For more details about
 Smale's condition   see \cite{S86}.

\begin{theorem}\label{theo:Smale}
Let $\Omega\subseteq \mathbb{R}^{n}$ be an open set,
$F:{\Omega}\to \mathbb{R}^{m}$ an analytic function and $m\geq n$. Let $x_* \in
\Omega$ and
$$c:=\|F(x_*)\|, \qquad \beta:=\|[F'(x_*)^TF'(x_*)]^{-1}F'(x_*)^T\|, \qquad \kappa:=\sup\{t
>0 : B(x_*, t)\subset \Omega\}.$$
Suppose that $x_*$ is a solution of \eqref{eq:p1},  $F '(x_*)$ has full
rank and
\begin{equation} \label{eq:SmaleCond}
 \qquad \gamma := \sup _{ n > 1 }\left\| \frac
{F^{(n)}(x_*)}{n !}\right\|^{1/(n-1)}<+\infty,\qquad
\qquad 2\sqrt{2}c\beta^2\gamma< 1.
\end{equation}
 Let
$a:=(2+3\beta-\sqrt{2}c\beta^2\gamma)$,
$b:=4(1+\beta)(1-2\sqrt{2}c\beta^2\gamma)$ and
$$
r:=\min \left\{\kappa, \big( a-
\sqrt{a^2-b}\big)/\big(2\gamma(1+\beta)\big)\right\}.
$$
Then, the Gauss-Newton methods for solving \eqref{eq:p1}, with
initial point $x_0\in B(x_*, r)/\{x_*\}$
$$
x_{k+1}={x_k}- \left[F'(x_k)^TF'(x_k)\right]^{-1}F'(x_k)^TF(x_k),\qquad \; k=0,1,\ldots,
$$
is well defined, the sequence generated $\{x_k\}$ is contained in $B(x_*,r)$, converges to $x_*$
and
\begin{align*}
    \|x_{k+1}-x_*\| \leq &
    \frac{\beta\gamma}{(1-\gamma \|x_0-x_*\|)^2-\beta\gamma(2\|x_0-x_*\|-\gamma\|x_0-x_*\|^2)}\|x_k-x_*\|^2\\+
    &\frac{\sqrt{2}c \beta^2\gamma(2-\gamma\|x_0-x_*\|)}{(1-\gamma \|x_0-x_*\|)^2-\beta\gamma(2\|x_0-x_*\|-\gamma\|x_0-x_*\|^2)}\|x_k-x_*\|,\quad \; k=0,1,\ldots.
\end{align*}
Moreover, if $(a- \sqrt{a^2-b})/(2\gamma(1+\beta))<\kappa$, then
$r=(a- \sqrt{a^2-b})/(2\gamma(1+\beta))$ is the best possible
convergence
  radius.\\
If additionally, $4c\beta_0\gamma<1,$ then the point $x_*$ is
the unique solution \eqref{eq:p1} in
$B(x_*,\sigma)$, where $\sigma:=(\omega_1-\sqrt{\omega_1^2-\omega_2})/(2\gamma(1+\beta)),$ $\omega_1:=(2+\beta-c\beta_0), \;
\omega_2:=4(1+\beta)(1-2c\beta_0\gamma), \; \beta_0:=\|[F'(x_*)^TF'(x_*)]\|.$
 \end{theorem}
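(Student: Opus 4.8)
The plan is to reduce Theorem~\ref{theo:Smale} to the main result, Theorem~\ref{th:nt}, by exhibiting the \emph{Smale majorant function}
\[
f(t)=\frac{t}{1-\gamma t}-2t, \qquad t\in[0,1/\gamma),
\]
exactly as the Lipschitz case (Theorem~\ref{th:ntqnnm}) was handled by the quadratic majorant $f(t)=Kt^2/2-t$. Taking $R=1/\gamma$, a direct computation gives $f(0)=0$, $f'(t)=1/(1-\gamma t)^2-2$ (so $f'(0)=-1$), and $f''(t)=2\gamma/(1-\gamma t)^3>0$; hence $f'$ is convex and strictly increasing and {\bf h1}, {\bf h2} hold. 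Since $D^+f'(0)=f''(0)=2\gamma$, conditions {\bf h3} and {\bf h4} become precisely $2\sqrt2\,c\beta^2\gamma<1$ and $4c\beta_0\gamma<1$, i.e.\ the hypotheses in \eqref{eq:SmaleCond} and the uniqueness assumption.

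The main obstacle is verifying the majorant inequality \eqref{Hyp:MH} for this $f$. I would first bound the second derivative of $F$: expanding $F''$ about $x_*$ and using the Smale estimate $\|F^{(n)}(x_*)\|/(n-2)!\le n(n-1)\gamma^{n-1}$ (which follows from \eqref{eq:SmaleCond}), Proposition~\ref{le:ess} collapses the resulting series, yielding
\[
\|F''(z)\|\le\sum_{n\ge2}n(n-1)\gamma^{n-1}\|z-x_*\|^{n-2}=\frac{2\gamma}{(1-\gamma\|z-x_*\|)^3}=f''(\|z-x_*\|)
\]
for $\|z-x_*\|<1/\gamma$. Writing $y=x_*+\tau(x-x_*)$ and $u=\|x-x_*\|$, integrating $F''$ along the segment joining $y$ to $x$ then gives $\|F'(x)-F'(y)\|\le\int_\tau^1 f''(su)\,u\,ds=f'(u)-f'(\tau u)$, which is exactly \eqref{Hyp:MH}. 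This is the crux; once it is in place, $F$, $x_*$ and $f$ satisfy every hypothesis of Theorem~\ref{th:nt}.

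It then remains to show that the abstract constants $\nu,\rho,\sigma$ and the rate \eqref{eq:q2} specialize to the explicit expressions stated. Two elementary identities drive everything: $tf'(t)-f(t)=\gamma t^2/(1-\gamma t)^2$ and $f'(t)+1=\gamma t(2-\gamma t)/(1-\gamma t)^2$. Substituting these into the inequality defining $\rho$ and clearing the common factor $(1-\gamma t)^2$ turns the condition into the quadratic
\[
\gamma^2(1+\beta)\,t^2-\gamma a\,t+\bigl(1-2\sqrt2\,c\beta^2\gamma\bigr)=0,\qquad a=2+3\beta-\sqrt2\,c\beta^2\gamma,
\]
whose smaller root is $\rho=(a-\sqrt{a^2-b})/(2\gamma(1+\beta))$ with $b=4(1+\beta)(1-2\sqrt2\,c\beta^2\gamma)$; since the $\rho$-quotient has denominator $t[1-\beta(f'(t)+1)]$, one has $0<\rho\le\nu<1/\gamma$ automatically. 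The same substitutions carry \eqref{eq:q2} into the displayed Smale rate, and the analogous reduction of the $\sigma$-defining inequality produces a quadratic whose smaller positive root is $\sigma=(\omega_1-\sqrt{\omega_1^2-\omega_2})/(2\gamma(1+\beta))$.

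Finally, optimality and uniqueness are inherited. Because $\rho$ is by construction the value at which the quotient defining it equals $1$, the hypothesis of Lemma~\ref{pr:best} holds whenever $\rho<\kappa$, so $r=\rho$ is the best possible radius; and when $4c\beta_0\gamma<1$, Lemma~\ref{pr:uniq} gives uniqueness of $x_*$ in $B(x_*,\sigma)$. Invoking Theorem~\ref{th:nt} with these data completes the argument.
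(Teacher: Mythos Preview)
Your proposal is correct and follows essentially the same route as the paper: you use the identical Smale majorant $f(t)=t/(1-\gamma t)-2t$, verify \eqref{Hyp:MH} by first bounding $\|F''\|$ via the Taylor expansion and Proposition~\ref{le:ess} (this is precisely the content of Lemmas~\ref{lemma:qc1} and~\ref{lc}), and then specialize the abstract constants of Theorem~\ref{th:nt} using the two identities $tf'(t)-f(t)=\gamma t^2/(1-\gamma t)^2$ and $f'(t)+1=\gamma t(2-\gamma t)/(1-\gamma t)^2$. The only difference is that you spell out the quadratic reduction for $\rho$ and $\sigma$ explicitly, whereas the paper dismisses this computation with ``it is easy to see''.
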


We need the following result to prove the above theorem.
\begin{lemma} \label{lemma:qc1}
Let $\Omega\subseteq \mathbb{R}^{n}$ be an open set and
$F:{\Omega}\to \mathbb{R}^{m}$ an analytic function.  Suppose that
$x_*\in \Omega$  and  $B(x_{*},
1/\gamma) \subset \Omega$, where $\gamma$ is defined in
\eqref{eq:SmaleCond}. Then, for all $x\in B(x_{*}, 1/\gamma)$
there holds
$$
\|F''(x)\| \leqslant  (2\gamma)/( 1- \gamma \|x-x_*\|)^3.
$$
\end{lemma}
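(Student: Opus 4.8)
The plan is to exploit the analyticity of $F$ to expand $F''$ as a power series centered at $x_*$, bound it term-by-term using the definition of $\gamma$, and then recognize the resulting numerical series as exactly the one summed in Proposition~\ref{le:ess}.

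First I would write, for $x\in B(x_*,1/\gamma)$, the Taylor expansion of the second derivative,
$$
F''(x)=\sum_{n=2}^{\infty}\frac{F^{(n)}(x_*)}{(n-2)!}(x-x_*)^{n-2},
$$
which is legitimate because $F$ is analytic and, as the estimates below confirm, this series converges throughout the ball. Here $F^{(n)}(x_*)$ is the $n$-th derivative viewed as a symmetric $n$-linear form, and $(x-x_*)^{n-2}$ denotes the insertion of $n-2$ copies of the vector $x-x_*$, which leaves a bilinear form.

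Next I would take norms and use submultiplicativity of the operator norm, namely $\|F^{(n)}(x_*)(x-x_*)^{n-2}\|\leq\|F^{(n)}(x_*)\|\,\|x-x_*\|^{n-2}$, to obtain
$$
\|F''(x)\|\leq\sum_{n=2}^{\infty}\frac{\|F^{(n)}(x_*)\|}{(n-2)!}\|x-x_*\|^{n-2}.
$$
Rewriting $1/(n-2)!=n(n-1)/n!$ and invoking the definition \eqref{eq:SmaleCond} of $\gamma$, which gives $\|F^{(n)}(x_*)/n!\|\leq\gamma^{n-1}$ for every $n\geq 2$, the right-hand side is dominated by $\sum_{n=2}^{\infty}n(n-1)\gamma^{n-1}\|x-x_*\|^{n-2}$. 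After the substitution $i=n-2$ and factoring out one power of $\gamma$, this becomes $\gamma\sum_{i=0}^{\infty}(i+2)(i+1)(\gamma\|x-x_*\|)^{i}$.

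Finally, since $x\in B(x_*,1/\gamma)$ forces $t:=\gamma\|x-x_*\|\in[0,1)$, Proposition~\ref{le:ess} evaluates the series to $2/(1-t)^3$, yielding exactly the claimed bound $\|F''(x)\|\leq 2\gamma/(1-\gamma\|x-x_*\|)^3$. The only point genuinely requiring care is the justification of the term-by-term norm estimate for the multilinear derivatives together with convergence of the series on the full ball $B(x_*,1/\gamma)$; both follow from the finiteness of $\gamma$, which guarantees that the radius of convergence is at least $1/\gamma$.
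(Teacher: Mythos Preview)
Your proof is correct and follows essentially the same route as the paper's own argument: expand $F''(x)$ in a Taylor series about $x_*$, bound term by term using the defining inequality $\|F^{(n)}(x_*)/n!\|\leq\gamma^{n-1}$, and apply Proposition~\ref{le:ess} with $t=\gamma\|x-x_*\|$. The only cosmetic difference is that the paper indexes the series starting at $n=0$ with $F^{(n+2)}(x_*)$, whereas you start at $n=2$ with $F^{(n)}(x_*)$ and then shift.
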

\begin{proof}
Let $x\in \Omega$.  Since $F$ is an analytic function, we have
$$
F''(x)= \sum _{n=0}^{\infty}\frac {1}{n!}F^{(n + 2)}(x_{*})(x -
x_{*})^{n}.
$$
Combining  \eqref{eq:SmaleCond} and  the above equation we obtain,
after some simple calculus, that
$$
\|F''(x)\| \leqslant \,\gamma \sum
_{n=0}^{\infty}(n+2)(n+1)(\gamma ||x-x_{*}||)^{n} .
$$
On the other hand, as $B(x_{*}, 1/\gamma) \subset \Omega$ we have
 $\gamma \|x-x_*\|< 1$. So,  from Proposition~\ref{le:ess}  we
conclude
$$
\frac{2}{(1-\gamma\|x-x_*\|)^3}=\sum_{n=0}^{\infty}(n+2)(n+1)(\gamma
||x-x_{*}||)^{n}.
$$
Combining the two above equations, we obtain the desired result.
\end{proof}
The next result gives a condition that is easier to check than
condition \eqref{Hyp:MH}, whenever the functions under
consideration are twice continuously differentiable.
\begin{lemma} \label{lc}
Let $\Omega\subseteq \mathbb{R}^{n}$ be an open set, $x_*\in \Omega$  and
  $F:{\Omega}\to \mathbb{R}^{m}$ be twice continuously on $\Omega$. If there exists a \mbox{$f:[0,R)\to \mathbb {R}$} twice continuously differentiable such that
 \begin{equation} \label{eq:lc2}
\|F''(x)\|\leqslant f''(\|x-x_*\|),
\end{equation}
for all $x\in  \Omega$ such that  $\|x-x_*\|<R$. Then $F$ and $f$
satisfy \eqref{Hyp:MH}.
\end{lemma}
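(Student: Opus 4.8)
The plan is to represent the difference $F'(x)-F'(x_*+\tau(x-x_*))$ as an integral of the second derivative $F''$ along the segment joining $x_*+\tau(x-x_*)$ to $x$, and then to majorize the integrand using \eqref{eq:lc2}. Fix $\tau\in[0,1]$ and $x\in B(x_*,\kappa)$, and abbreviate $y:=x-x_*$ and $t:=\|y\|=\|x-x_*\|$. I would first record that the segment stays in the region where the bound is available: since the ball $B(x_*,\kappa)$ is convex and contains $x_*$, every point $x_*+sy$ with $s\in[0,1]$ belongs to $B(x_*,\kappa)\subset\Omega$, and $\|x_*+sy-x_*\|=s\,t\leq t<\kappa\leq R$, so \eqref{eq:lc2} applies at each such point (here I use that $\kappa\leq R$ by the definition of $\kappa$).

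Next I would apply the fundamental theorem of calculus to the curve $s\mapsto F'(x_*+sy)$, whose derivative is $F''(x_*+sy)\,y$ (reading $F''$ as the bilinear map whose operator norm is the $\|F''\|$ appearing in \eqref{eq:lc2}), to obtain
\[
F'(x)-F'(x_*+\tau y)=\int_{\tau}^{1}F''(x_*+sy)\,y\,ds.
\]
Taking norms, using $\|F''(x_*+sy)\,y\|\leq\|F''(x_*+sy)\|\,\|y\|$ and then the hypothesis $\|F''(x_*+sy)\|\leq f''(st)$ coming from \eqref{eq:lc2}, yields
\[
\left\|F'(x)-F'(x_*+\tau y)\right\|\leq\int_{\tau}^{1}f''(st)\,t\,ds.
\]

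Finally I would evaluate this last integral through the substitution $w=st$, $dw=t\,ds$, which recasts it as $\int_{\tau t}^{t}f''(w)\,dw=f'(t)-f'(\tau t)$, precisely the right-hand side of \eqref{Hyp:MH} with $t=\|x-x_*\|$. Once the integral representation is in place the computation is routine; the only steps deserving care are the verification that the whole segment lies inside the set where \eqref{eq:lc2} is valid --- handled above by convexity of $B(x_*,\kappa)$ together with the inclusion $B(x_*,\kappa)\subset\Omega$ --- and the correct reading of $F''$ as a bilinear form so that $\tfrac{d}{ds}F'(x_*+sy)=F''(x_*+sy)\,y$.
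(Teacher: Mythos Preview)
Your proof is correct and follows essentially the same route as the paper's own argument: represent $F'(x)-F'(x_*+\tau(x-x_*))$ as $\int_\tau^1 F''(x_*+s(x-x_*))(x-x_*)\,ds$, bound the integrand via \eqref{eq:lc2}, and integrate $f''$ to recover $f'(\|x-x_*\|)-f'(\tau\|x-x_*\|)$. Your additional care in checking that the whole segment lies in $B(x_*,\kappa)\subset\Omega$ and satisfies $\|x_*+sy-x_*\|<R$ is a welcome clarification that the paper leaves implicit.
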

\begin{proof}
   Taking $\tau \in [0,1]$ and $x\in \Omega$, such that $x_*+\tau(x-x_*)\in \Omega$  and   $\|x-x_*\|<R$, we
  obtain that
 $$
 \|\left[F'(x)-F'(x_*+\tau(x-x_*))\right]\|\leq
  \int_{\tau}^{1}\|F''(x_*+t(x-x_*))\|\,\|x-x_*\|dt.
$$
  Now, as $\|x-x_*\|<R$ and $f$ satisfies \eqref{eq:lc2}, we obtain from the last inequality that
\begin{align*}
 \|\left[F'(x)-F'(x_*+\tau(x-x_*))\right]\|&\leq \int_{\tau}^{1}f''(t\|x-x_*\|)\|x-x_*\|dt.
\end{align*}
Evaluating the latter integral, the statement follows.
\end{proof}
{\bf [Proof of Theorem \ref{theo:Smale}]}.  Consider the real
function $f:[0,1/\gamma) \to \mathbb{R}$ defined by
$$
f(t)=\frac{t}{1-\gamma t}-2t.
$$
It is straightforward to show that $f$ is  analytic and that
$$
f(0)=0, \quad f'(t)=1/(1-\gamma t)^2-2, \quad f'(0)=-1, \quad
f''(t)=(2\gamma)/(1-\gamma t)^3, \quad f^{n}(0)=n!\,\gamma^{n-1},
$$
for $n\geq 2$. It follows from the last equalities
that $f$ satisfies {\bf h1}  and  {\bf h2}. Since $ 2\sqrt{2}c\beta^2\gamma< 1$ and  $4c\beta_0\gamma<1$ the  conditions  {\bf h3}  and  {\bf h4} also hold. Now, as
$f''(t)=(2\gamma)/(1-\gamma t)^3$ combining Lemmas ~\ref{lc},
\ref{lemma:qc1}  we conclude that $F$  and $f$ satisfy
\eqref{Hyp:MH} with $R=1/\gamma$. In this case, it is easy to see
that the constants $\nu$  and $\rho$ as defined in Theorem \ref{th:nt}, satisfy
$$
0<\rho=(a-\sqrt{a^2-b})/(2\gamma(1+\beta))<\nu=((1+\beta)-\sqrt{\beta(1+\beta)})/(\gamma(1+\beta))<1\gamma,
$$
and as a consequence,
$
0<r=\min \{\kappa,\rho\}.
$
Moreover,   it is not hard to see that
$$
[\beta(\rho
f'(\rho)-f(\rho))+\sqrt{2}c\beta^2(f'(\rho)+1)]/[\rho(1-\beta(f'(\rho)+1))]=1,
$$
and $[ \beta(f(t)/t+1)+c\beta_0(f'(t)+1)/t]<1$ for all $t \in (0,\sigma$).
Therefore, as  $F$,
$\sigma$, $f$ and $x_*$ satisfy all hypothesis of  Theorem
\ref{th:nt}, taking $x_0\in B(x_*, r)\backslash \{x_*\}$, the
statements of the theorem follow from Theorem \ref{th:nt}.\qed

For the zero-residual problems, i.e., $c=0$,  the Theorem \ref{theo:Smale} becomes:
\begin{corollary}\label{cor:tt}
Let $\Omega\subseteq \mathbb{R}^{n}$ be an open set,
$F:{\Omega}\to \mathbb{R}^{m}$ an analytic function and $m\geq n$. Let $x_* \in
\Omega,$   and
$$\beta:=\|[F'(x_*)^TF'(x_*)]^{-1}F'(x_*)^T\|, \qquad \kappa:=\sup\{t
>0 : B(x_*, t)\subset \Omega\}.$$
Suppose that $F(x_*)=0$,  $F '(x_*)$ has full
rank and
$$
 \gamma := \sup _{ n > 1 }\left\| \frac
{F^{(n)}(x_*)}{n !}\right\|^{1/(n-1)}<+\infty.
$$
 Let
$$
r:=\min \left\{\kappa,\big( 2+3\beta-
\sqrt{\beta(8+9\beta)}\big)/\big(2\gamma(1+\beta)\big)\right\}.
$$
Then, the Gauss-Newton methods for solving \eqref{eq:p1}, with
initial point $x_0\in B(x_*, r)/\{x_*\}$
$$
x_{k+1}={x_k}- \left[F'(x_k)^TF'(x_k)\right]^{-1}F'(x_k)^TF(x_k),\qquad  k=0,1,\ldots,
$$
is well defined, is contained in $B(x_*,r)$ and converges to $x_*$
which   is the unique solution of \eqref{eq:p1} in
$B(x_*,1/(\gamma(1+\beta)))$. Moreover, there holds
$$
    \|x_{k+1}-x_*\| \leq
    \frac{\beta\gamma}{(1-\gamma \|x_0-x_*\|)^2-\beta\gamma(2\|x_0-x_*\|-\gamma\|x_0-x_*\|^2)}\|x_k-x_*\|^2,\qquad  k=0,1,\ldots.
$$
If, additionally, $(2+3\beta-
\sqrt{\beta(8+9\beta)})/(2\gamma(1+\beta))<\kappa$, then
$r=((2+3\beta- \sqrt{\beta(8+9\beta)})/(2\gamma(1+\beta))$ is the
best possible convergence
  radius.
 \end{corollary}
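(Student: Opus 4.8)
The plan is to obtain this corollary as the immediate specialization of Theorem~\ref{theo:Smale} to the zero-residual case. Since $F(x_*)=0$ forces $c:=\|F(x_*)\|=0$, the two residual conditions $2\sqrt{2}c\beta^2\gamma<1$ and $4c\beta_0\gamma<1$ appearing in Theorem~\ref{theo:Smale} are trivially satisfied; consequently all of its hypotheses hold here as soon as the Smale condition $\gamma<+\infty$ and the full-rank assumption on $F'(x_*)$ are in force, both of which are given. Thus the well-definedness of the iteration, its containment in $B(x_*,r)$, and the convergence $x_k\to x_*$ are inherited directly from Theorem~\ref{theo:Smale}.

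The remaining work is purely algebraic: substitute $c=0$ into the constants of Theorem~\ref{theo:Smale} and simplify. Setting $c=0$ gives $a=2+3\beta$ and $b=4(1+\beta)$, so the discriminant collapses according to
\begin{equation*}
a^2-b=(2+3\beta)^2-4(1+\beta)=4+12\beta+9\beta^2-4-4\beta=\beta(8+9\beta),
\end{equation*}
which converts the radius $r=(a-\sqrt{a^2-b})/(2\gamma(1+\beta))$ into the stated $r=(2+3\beta-\sqrt{\beta(8+9\beta)})/(2\gamma(1+\beta))$. In the error estimate of Theorem~\ref{theo:Smale} the linear term carries the prefactor $\sqrt{2}c\beta^2\gamma$, which vanishes when $c=0$; only the quadratic term survives, producing precisely the displayed $Q$-quadratic bound. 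The best-possible-radius conclusion transfers verbatim once $a^2-b$ is rewritten as above.

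For the uniqueness ball, substitute $c=0$ into $\omega_1=2+\beta-c\beta_0$ and $\omega_2=4(1+\beta)(1-2c\beta_0\gamma)$ to obtain $\omega_1=2+\beta$ and $\omega_2=4(1+\beta)$, whose discriminant simplifies as
\begin{equation*}
\omega_1^2-\omega_2=(2+\beta)^2-4(1+\beta)=4+4\beta+\beta^2-4-4\beta=\beta^2,
\end{equation*}
so that $\sigma=(\omega_1-\sqrt{\omega_1^2-\omega_2})/(2\gamma(1+\beta))=(2+\beta-\beta)/(2\gamma(1+\beta))=1/(\gamma(1+\beta))$, matching the claimed uniqueness region $B(x_*,1/(\gamma(1+\beta)))$. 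I anticipate no genuine obstacle in this argument: every assertion is inherited from Theorem~\ref{theo:Smale}, and the only content is verifying the two discriminant identities $a^2-b=\beta(8+9\beta)$ and $\omega_1^2-\omega_2=\beta^2$, each an elementary expansion. I would therefore simply invoke Theorem~\ref{theo:Smale} with $c=0$ and record these two simplifications.
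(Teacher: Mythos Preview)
Your proposal is correct and matches the paper's own treatment: the corollary is stated immediately after Theorem~\ref{theo:Smale} as its zero-residual ($c=0$) specialization, with no separate proof given. Your explicit verification of the two discriminant identities $a^2-b=\beta(8+9\beta)$ and $\omega_1^2-\omega_2=\beta^2$ simply fills in the algebra that the paper leaves to the reader.
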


\begin{remark}
When $m=n$, the Corollary \ref{cor:tt}  is similar to the
results on the Newton's method for solving nonlinear equations
$F(x)=0$, which has been obtained by Ferreira
\cite{F08} in Theorem 3.4.
\end{remark}


\def\cprime{$'$}

\end{document}